\numberwithin{equation}{section}
\newtheorem{theorem}{Theorem}
\newtheorem{corollary}[theorem]{Corollary}
\newtheorem{lemma}[theorem]{Lemma}
\newtheorem{proposition}[theorem]{Proposition}
\newtheorem{example}{Example}
\def\@biblabel#1{#1.}
\newcounter{obr}[section]
\newcounter{pvv}[section]
\renewcommand{\thepvv}{\thesection.\arabic{pvv}}
\newenvironment{pv}[2][]{\begin{trivlist}\refstepcounter{pvv}%
\item[\hspace{\labelsep}\normalfont\bfseries\thepvv. #2%
  \def\tmp{#1}\ifx\tmp\empty\else{} (#1)\fi.]}%
{\end{trivlist}}
\newcommand{\set}[2]{\ensuremath{\{ #1\,:\; #2\}}}
\newcommand{\nn}{\ensuremath{\mathbb N}}
\newcommand{\cc}{\ensuremath{\mathbb C}}
\newcommand{\rr}{\ensuremath{\mathbb{R}}}
\newcommand{\8}{\ensuremath{\infty}}
\def\tr{\mathop{\rm tr}\nolimits}
\newcommand{\f}{\ensuremath{\varphi}}
\renewcommand{\l}{\ensuremath{\lambda}}
\newcommand{\ro}{\ensuremath{\varrho}}
\newcommand{\si}{\ensuremath{\sigma}}
\newcommand{\al}{\ensuremath{\alpha}}
\newcommand{\vNa}{von Neumann algebra}
\newcommand{\Ca}{$C^\ast$-algebra}
\newcommand{\Csa}{$C^\ast$-subalgebra}
\newcommand{\HS}{Hilbert space}
\newcommand{\st}{such that}
\newcommand{\Cl}[1]{\ensuremath{{\mathcal #1}}}
\newcommand{\nor}[1]{\ensuremath{\|#1\|}}
\begin{document}

\begin{center}
{\bf \LARGE  The order topology on duals of $C^\ast$-algebras and von Neumann algebras}\\[1cm]
\end{center}

\begin{center}

EMMANUEL  CHETCUTI and JAN HAMHALTER\\
\end{center}

\begin{center}
Department of Mathematics\\
Faculty of Science\\
University of Malta \\
Msida, Malta\\
emmanuel.chetcuti@um.edu.mt \\
\end{center}

\begin{center}
Department of Mathematics\\
Faculty of Electrical Engineering\\
Czech Technical University in Prague \\
Technicka 2, 166 27 Prague 6, Czech Republic\\
hamhalte@math.feld.cvut.cz \\
\end{center}
\vspace{2cm}

{\small Abstract:  For a von Neumann algebra $\Cl M$ we study the order topology associated to the hermitian part $\Cl M_*^s$ and to intervals of  the predual $\Cl M_*$.  It is shown that  the order topology on $\Cl M_*^s$ coincides with the topology induced by the norm.  In contrast to this, it is proved that the condition of having the order topology associated to the interval $[0,\f]$  equal to that induced by the norm for every $\f\in \Cl M_*^+$, is necessary and sufficient for the commutativity of $\Cl M$.  It is also proved that if $\f$ is a positive bounded linear functional on a \Ca{} \Cl A{}, then the norm-null sequences in $[0,\f]$ coincide with the null sequences with respect to the order topology on $[0,\f]$ if and only if the \vNa{} $\pi_\f(\Cl A)'$  is of finite type  (where $\pi_\f$ denotes  the corresponding GNS representation). This fact allows us to give a new topological characterization of  finite \vNa s.   Moreover, we demonstrate that convergence to zero for norm and order topology on order-bounded parts of dual spaces are nonequivalent for all \Ca s that are not of Type $I$.


2010 MSC: 46L10, 4605, 46L30, 06F30   \\

Key words: dual spaces of \Ca s, order topology   \\

\section{Introduction and preliminaries}
The aim of the paper is to present new results on the interplay between the norm topology and the order topology on dual spaces of \Ca s{} and \vNa s.

 The following variant of the squeezing lemma is well known to every student of calculus. Let $(y_n)$ be   an increasing sequence  of real numbers  and $(z_n)$ be a decreasing  sequence  of real numbers such that $y_n\le z_n$ for all $n$. Suppose that  the supremum of $(y_n)$ is the same as the infimum of $(z_n)$ and equals to $\al$. Then, for any other sequence, $(x_n)$, squeezed between $(y_n)$ and $(z_n)$ (i.e. $y_n\le x_n \le z_n$ for all $n$)  we have that $(x_n)$ converges to $\al$.  This simple principle  lies behind the definition  of order convergence, and subsequently,  of the order topology $\tau_o(P)$  on a general poset $P$ which is defined as the finest topology on $P$ for which the squeezing lemma holds. Order convergence has been studied in the context of posets and lattices by various authors  \cite{Birkhoff1,Birkhoff2, Kantorovich}  (see also \cite{Frink,MaAn,McShane}).  If the order structure of $P$ is  functional analytic it may appear that the order topology captures some analytic features of $P$. We mention, for example, the beautiful result of  Floyd and  Klee \cite{FloydKlee} saying that a normed space $X$ is reflexive if and only if the order  topology on the lattice of all closed subspaces of $X$ ordered by set-theoretic inclusion is Hausdorff. Continuing this line of research, the order topology on the lattice of closed subspaces of a \HS{} was studied in \cite{Palko,CHW}. A systematic treatment of various aspects of order topologies on structures associated with \vNa s was carried out in \cite{CHW}. In this case the order topology comes from the standard  operator order defined on the self-adjoint operators acting on a \HS{} $H$; namely, for self-adjoint operators $T$ and $S$, we have  $T\le S$  if the operator $S-T$ has a positive spectrum.   The work in \cite{CHW} showed that the order topology on  the self-adjoint part of a \vNa{},  albeit being far from being a  linear topology, on bounded parts it coincides -- in a surprising way --   with the strong operator topology.  Besides,  finite von Neumann algebras were characterized in terms of the order topology on its projection lattice.  Study  of order topologies on \vNa s that are induced by the star order was given in an interesting  recent work \cite{Bohata}.

In    the present paper we  study  the order topology induced by the canonical  order on duals and preduals of \Ca s.  The order  is defined as follows: For linear functionals $\f$ and $\psi$ on a \Ca{} \Cl A{} we have $\f\le\psi$ if $\f(x)\le \psi(x)$ for all positive $x\in \Cl A$.   Our exposition is organized as follows.  In the second section we extend the result of our  previous work in \cite{CHW} by showing that   a $\si$-finite \vNa{} \Cl M {}is finite if and only if  the strong operator topology and  the order topology on the  effect algebra $\Cl E(\Cl M)$  of \Cl M{} have the same null sequences.    In the third section  we demonstrate  that the order topology on the hermitian part of the predual of a \vNa{} is precisely the norm topology.  This is in contrast with the order topology on operator algebras -- in general the later is not linear. Therefore, in dual spaces the order  determines the norm topology.  This explicitly means that if  $T: \Cl A^\ast_s \to \Cl B^\ast_s$ is a bijection (not necessarily linear) between the hermitian parts of the duals of \Ca s $\Cl A$ and $\Cl B$ preserving the order in both directions then $T$ is a homeomorphism  with respect to the norm topologies. Our main results are given in the fourth section.  Let \f{} be a  state  on a \Ca{}  \Cl A{}. We investigate the order topology of the interval $[0,\f]$ and it is shown that this topology differs from the subspace topology induced by the order topology $\tau_o(\Cl A^\ast_s)$.  Indeed, the norm null sequences coincide with the null sequences in the order topology $\tau_o[0,\f]$ exactly when the commutant, $\pi_\f(\Cl A)'$,  of the GNS algebra is a finite \vNa. Using modular theory we then show that a \si-finite \vNa{} is finite if and only if
the null sequences with respect to the norm and the order topology $\tau_o[0,\f]$ coincide for some (every) normal faithful state \f{} on \Cl M. Pursuing the matter  further we show that on \Ca s that are not of Type I we can always find an interval $[0,\f]$ in the dual space on which  convergence to zero in norm and order topology are not  equivalent.     In contrast to this  we show that  a \Ca{} \Cl A{} is abelian if and only if for each state \f{} on \Cl A{} the norm and the order topology on $[0, \f]$ coincide. One one hand this extends classical results on order topology on measurable functions. On the other hand  it characterizes abelian \Ca{} in terms of order topology. As an important  technical  tool we are using the map that identifies positive functionals dominated by a given state with operators in the effect algebra of the commutant algebra associated with the GNS representation. This can be viewed as an analog of the Radon-Nikodym Theorem.  We are also giving some analysis of the continuity properties of this map which may be of independent interest.

Let us recall a few notions and fix the notation.
If $\tau_1$ and $\tau_2$ are two topologies on a set $X$ such that $\tau_1\subset \tau_2$ then we say that $\tau_2$ is finer than $\tau_1$.

Let $(P, \le)$ be a poset. Given $a,b\in P$ we shall use the symbol  $[a,b]$ for the set
$\set{x\in P}{a\le x \le b}$. If $S\subset P$, then $\bigvee S$ and $\bigwedge S$ will denote the supremum and the infimum of $S$, respectively (on condition that they exist). Let $(x_\gamma)_{\gamma\in \Gamma}$ be a net in $P$. We shall say that this  net is increasing (resp. decreasing) if $x_{\gamma_1}\le x_{\gamma_2}$ (resp.  $x_{\gamma_1}\ge x_{\gamma_2}$) whenever  $\gamma_1\le \gamma_2$. The symbol
$x_\gamma \uparrow x$ means that the net $(x_\gamma)$ is increasing and $x=\bigvee \set{x_\gamma}{\gamma \in \Gamma}$. Similarly,  $x_\gamma \downarrow x$ means that the net $(x_\gamma)$ is decreasing  and $x=\bigwedge \set{x_\gamma}{\gamma \in \Gamma}$. A net $(x_\gamma)_{\gamma\in \Gamma}$ is said to \emph{order converge} to $x\in P$ if there exist nets
$(y_\gamma)_{\gamma\in \Gamma}$ and $(z_\gamma)_{\gamma\in \Gamma}$  \st{}
\[ y_\gamma \le x_\gamma \le z _\gamma \qquad  \mbox{ and }\qquad y_\gamma\uparrow x, z_\gamma\downarrow x\,.\]
A subset $X\subset P$ is called \emph{order-closed} (resp. \emph{sequentially order-closed}) if there is no net in $X$ (resp. no sequence) order converging to a point outside $X$. The collection of order-closed sets (resp. sequentially order-closed sets) determines a  topology called  the \emph{order topology} (resp. \emph{sequential order topology}) on $P$. By the symbol $\tau_o(P)$ and $\tau_{os}(P)$ we shall mean the order topology  and the sequential order topology on $P$, respectively. Clearly, $\tau_o(P)\subset  \tau_{os}(P)$. We shall say that $P$ is \emph{monotone order separable} if  for every increasing (and resp. decreasing) net $(x_\gamma)_{\gamma\in \Gamma}$ there is  an increasing (resp. decreasing) sequence $(\gamma_n)$ in $\Gamma$ \st{} $\bigvee_{n\in\nn} x_{\gamma_n}=\bigvee_{\gamma\in\Gamma} x_\gamma$ (resp.  $\bigwedge_{n\in\nn} x_{\gamma_n}=\bigwedge_{\gamma\in\Gamma} x_\gamma$). It has been proved in \cite[Proposition 3]{BCW} that $\tau_o(P)=\tau_{os}(P)$ if and only if $P$ is monotone order separable. We shall use many times the following result proved in  \cite[Proposition 2]{BCW}: The sequence $(x_n)$ in $P$ converges in the sequential order topology to   $x\in P$ if and only if from each subsequence of $(x_n)$ we can extract a further subsequence that order converges to $x$.

Let us remark that for a nonempty subset $P_0$ of  $P$, the subspace topology $(P_0, \tau_o(P))$ and  the intrinsic order topology $(P_0, \tau_o(P_0))$ are not comparable in general. This will also follow from the results of the present paper.

  Let $X$ be a Banach space. Then $X_1$ will denote its closed unit ball. By $X^*$ and $X_*$ we shall denote the dual and the predual of $X$, respectively.  For a set $Y\subset X$ we shall denote by $[Y]$ the closed linear span of $Y$.   The norm topology on $X$  will be denoted by $\tau_{\|\cdot\|}(X)$.

  Throughout   the paper let  \Cl A{} be a unital \Ca. For all unmentioned details on operator algebras we refer the reader to the monographs \cite{B,Takesaki,Kad2,Sakai}.      The unit of \Cl A{}  will be denoted by $\mathds 1$.   Let $\Cl A_s=\set{x\in \Cl A}{x=x^*}$ and $\Cl A_+= \set{x^*x}{x\in \Cl A}$.  Then $\Cl A_s$ is a real vector space and $\Cl A_+$ is a cone in it.  When equipped with the partial order $\le$ induced by the cone $\Cl A_+$ the self-adjoint elements $\Cl A_s$ form an ordered vector space with order unit $\mathds{1}$.  The \emph{effect algebra} of $\Cl A$ is defined  as $\Cl E(\Cl A)=\set{x\in \Cl A}{0\le x \le \mathds 1}$. Let us remark that  the term \emph{effect} comes from an operator algebraic approach to quantum theory, where the structure of  positive contractive operators is connected with quantum measurement.      The projection poset $\Cl P(\Cl A)$ is the set $\set{p\in \Cl A}{p=p^2=p^*}$ equipped with the partial order inherited from $\Cl A_s$. Let $\Cl A^*_s$ be the set $\set{\f\in \Cl A^*}{\f(x)\in \rr \mbox{ for all }x=x^*}$. A positive functional $\f$ on \Cl A{} is a functional such that $\f(x)\ge 0$ for every $x\ge 0$ in \Cl A.  A  positive functional having unit norm is called a state. A positive functional \f{} is called faithful if $\f(a^*a)=0$ implies that $a=0$. The set of positive functionals on $\Cl A$, denoted by $\Cl A_+^*$, is a cone in $\Cl A^*_s$ and thus it induces a partial order on $\Cl A^*_s$. The symbol $B(H)$ will denote the \Ca{} of all bounded operators acting on a \HS{} $H$.  For a set $X\subset B(H)$ we shall denote its commutant by $X'= \set{y\in B(H)}{yx=xy \mbox{ for all } x\in X}$.
Given $\xi\in H$, the vector functional $\omega_\xi$ will be  defined by $\omega_\xi(a)= \langle a\xi, \xi \rangle$. It is a positive functional when restricted to any \Csa{}   of $B(H)$.
Let  $\Cl A\subset B(H)$ and   $\xi$ in $H$. The vector $\xi$ is called separating for \Cl A{} if
$a\xi=0$ implies $a=0$ for all $a\in \Cl A$. The vector $\xi$ is called cyclic  for \Cl A{} if $[\Cl A\xi]=H$. Finally, $\xi$ is said to be bicyclic for $\Cl A$ if it is both cyclic and separating.
Given a \Ca{} \Cl A{} we shall denote by $M_k(\Cl A)$ the matrix algebra of all $k\times k$ matrices  over \Cl A.

 Given a positive functional \f{} on \Cl A{},   there is a Hilbert space $H_\f$, a  $*$-representation $\pi_\f:\Cl A\to B(H_\f)$,  and a vector $\xi_\f\in H_\f$  such that $\f(a)=\langle \pi_\f(a)\xi_\f, \xi_\f\rangle$ ($a\in \Cl A$) and $[\pi_\f(\Cl A)\xi_\f]=H_\f.$  The triple $(H_\f, \pi_\f, \xi_\f)$  is  called the GNS representation of \f.  It is unique up to unitary equivalence.
A \Ca{} \Cl A{}  is called a Type I \Ca{} if each nonzero quotient \Cl B{} of \Cl A{} contains a nonzero positive element $x$ such that the closure of $x\Cl B x$ is an abelian \Ca.

A von Neumann algebra is defined as a \Ca{} that is a dual  Banach space. Throughout  the paper \Cl M{} will represent a \vNa.  Elements of the  predual of $\Cl M$ are called normal functionals.  A \Ca{} $\Cl A\subset B(H)$     containing the identity operator on $H$ is a \vNa{} if and only if $\Cl A = \Cl A''$. If \f{} is a normal positive  functional on \Cl M{} and $\psi$ is a positive functional with $\psi\le \f$, then $\psi$ is normal as well.
 For a functional \f{} in $\Cl M_*$  we shall denote by $|\f|$ its absolute value, that is $|\f|$ is a positive functional in $\Cl M_*$ such that  it has the same norm as $\f$ and satisfies $|\f(x)|^2\le \|{\f}\| \cdot |\f|(xx^*)$ for all $x\in \Cl M$. In the case when \f{} is hermitian we have that $\f\le |\f|$.    Let  us recall basic topologies associated with \vNa s.  The $\si$-strong topology on \Cl M{} is  the locally convex topology generated by the seminorms $\set{ \ro_\psi}{\psi \in \Cl M_*^+}$, where $\ro_\psi(x)= \sqrt{ \psi(x^*x)}$. It will be denoted by $s(\Cl M,\Cl M_*)$.  If \Cl M{} acts on a \HS{} $H$, we denote by  $\tau_s(\Cl M)$ the strong operator topology, i.e. the topology determined by the seminorms $\set{p_\xi}{\xi\in H}$, where $p_\xi(x)=\|x\xi\|$.  We have $\tau_s(\Cl M) \subset \si(\Cl M,\Cl M_*)$. These  two topologies  coincide on bounded subsets of $B(H)$.
A \vNa{} is called $\si$-finite if it does not admit any uncountable system of pairwise orthogonal nonzero projections. A state \f{} on a \Ca{} is called a trace if $\f(x^*x)=\f(xx^*)$ for all $x\in \Cl A$. For a normal trace $\psi$  on a \vNa{} \Cl M{} we have the subadditivity property with respect to projections: $\psi(\bigvee_n p_n)\le \sum_n \psi(p_n)$ where $(p_n)$ is a sequence of projections and the supremum is computed in the projection lattice. A \Ca{} \Cl A{} is called  finite if $aa^*=\mathds 1$ whenever $a\in\Cl A$ and $a^*a=\mathds 1$. A projection $p$ in \Cl A{} is called finite if the algebra $p\Cl A p$ is finite.  $\Cl M$ is called Type II if it finite and contains no nonzero projection $p$  such that  $p\Cl A p$ is abelian.  $\Cl M$ is called Type III if it does not contain any nonzero finite projection.

\section{The order topology on the effects}

In this section we  consider the order topology on  the effect structure $\Cl E(\Cl M)$ and prove a theorem that gives a new topological characterization of finite \vNa s.
This theorem is a key result for our further investigation presented in this  paper.  Although the proof is a variant of  \cite[Theorem 5.3]{CHW} we prefer to give a self-contained argument for the sake of completeness.

\begin{theorem}\label{1.0}
Let \Cl M{}  be a \si-finite \vNa. Then the  following statements are equivalent.
\begin{enumerate}
\item \Cl M{} is finite.
\item Every sequence $(x_n)$ in $\Cl E(\Cl M)$ converging $\si$-strongly to zero converges to zero with respect to $\tau_o(\Cl E(\Cl M))$.
\end{enumerate}
\end{theorem}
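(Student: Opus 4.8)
My plan is to reduce (2) to a statement about order-null subsequences and then treat the two implications separately, the construction in the non-finite case being the crux. Since $\Cl M$ is $\si$-finite it admits a faithful normal state $\f$, and I would first record that $\Cl E(\Cl M)$ is then monotone order separable: for an increasing net $x_\gamma\uparrow x$ pick an increasing sequence $(\gamma_n)$ with $\f(x_{\gamma_n})\to\f(x)=\sup_\gamma\f(x_\gamma)$; normality and faithfulness of $\f$ force $\bigvee_n x_{\gamma_n}=x$, and dually for decreasing nets. By \cite[Proposition 3]{BCW} this gives $\tau_o(\Cl E(\Cl M))=\tau_{os}(\Cl E(\Cl M))$, so I may invoke \cite[Proposition 2]{BCW} throughout; on the bounded set $\Cl E(\Cl M)$ the $\si$-strong and strong operator topologies coincide. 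Finally, a positive sequence $(x_n)$ order converges to $0$ exactly when there is a decreasing sequence $z_n\downarrow 0$ with $x_n\le z_n$ (take the lower net $\equiv 0$). Putting these together, (2) is equivalent to the assertion that every $\si$-strongly null sequence in $\Cl E(\Cl M)$ has a subsequence dominated by some $z_k\downarrow 0$, i.e.\ an order-null subsequence.

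For (1)$\Rightarrow$(2), finiteness together with $\si$-finiteness furnishes a faithful normal tracial state $\tau$. Given $\si$-strongly null $(x_n)$ I have $\tau(x_n^2)\to 0$, hence $\tau(x_n)\le\tau(x_n^2)^{1/2}\to 0$ by Cauchy--Schwarz. I would extract a subsequence with $\tau(x_{n_k})\le 4^{-k}$, set $\e_k=2^{-k}$ and $p_k=\chi_{[\e_k,1]}(x_{n_k})$, so that $x_{n_k}\le\e_k\mathds 1+p_k$ and $\tau(p_k)\le\tau(x_{n_k})/\e_k\le 2^{-k}$. With $q_k=\bigvee_{j\ge k}p_j$, subadditivity of the normal trace on projections gives $\tau(q_k)\le\sum_{j\ge k}2^{-j}\to 0$, whence $q_k\downarrow 0$ by faithfulness; then $z_k=\e_k\mathds 1+q_k\downarrow 0$ and $x_{n_k}\le z_k$. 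The decisive point is that the trace balances the lowering of the thresholds $\e_k$ against the growth of the spectral supports $p_k$, and this is exactly the mechanism that must fail in the infinite case.

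For (2)$\Rightarrow$(1) I argue by contraposition, and I expect this to be the main obstacle. Suppose $\Cl M$ is not finite; cutting by a central projection I may assume it is properly infinite, and I must produce a $\si$-strongly null sequence in $\Cl E(\Cl M)$ with no order-null subsequence. The following reduction guides the construction: if a subsequence $(x_{n_k})$ is dominated by some $z_k\downarrow 0$, then writing $e_{n}=\chi_{[\eta,1]}(x_{n})$ for a fixed level $\eta>0$ one has $z_k\ge x_{n_k}\ge\eta e_{n_k}$, and since $(z_k)$ is decreasing, $z_m\ge\eta\bigvee_{k\ge m}e_{n_k}$, so $0=\bigwedge_m z_m\ge\eta\bigwedge_m\bigvee_{k\ge m}e_{n_k}$. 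As each $e_n$ is itself $\si$-strongly null (because $\eta\,\psi(e_n)\le\psi(x_n)\to 0$), the task becomes to build $\si$-strongly null projections whose tail suprema $\bigvee_{k\ge m}e_{n_k}$ fail to decrease to $0$ along \emph{every} subsequence. This is precisely the projection-lattice phenomenon underlying \cite[Theorem 5.3]{CHW}.

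The difficulty, and the reason proper infiniteness is essential, is that the obvious candidates do not work: a $\si$-strongly null sequence of rank-one projections has an almost-orthonormal subsequence whose span projections do decrease to $0$, and projections with commuting (in particular orthogonal) supports mimic the abelian, measure-theoretic situation and again admit order-null subsequences. A genuine example must therefore consist of infinite-rank projections, each $\si$-strongly small yet spread across all scales, so that arbitrarily sparse infinite subfamilies still retain a common vector in the closure of their joint range; the infinite supply of mutually equivalent orthogonal subprojections available in a properly infinite algebra is what provides the room to arrange this. Verifying that the tail suprema stay bounded below after passing to every subsequence — so that \cite[Proposition 2]{BCW} yields honest $\tau_o$-divergence — is the delicate heart of the argument, and it is exactly here that non-finiteness (equivalently, the absence of a finite normal trace to tame the spectral supports) enters decisively.
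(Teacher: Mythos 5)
Your proposal has a genuine gap, and it sits exactly where you yourself locate ``the delicate heart of the argument'': the implication (ii)$\Rightarrow$(i) is never actually proved. You correctly reduce it to producing, in a properly infinite algebra, a $\sigma$-strongly null sequence of projections whose tail suprema $\bigvee_{k\ge m}p_{n_k}$ stay bounded below along \emph{every} subsequence, and you correctly diagnose why rank-one or mutually commuting projections cannot work --- but then you stop, gesturing at ``the room provided by proper infiniteness'' without exhibiting a family or verifying the lower bound. That verification is the content of the implication, and it admits a short explicit construction, which is what the paper does: a properly infinite summand contains a unital copy of $B(H)$ with $H$ separable infinite-dimensional; fixing an orthonormal basis $(\xi_n)$, let $p_n$ be the projection onto $[\{\xi_n+\tfrac1n\xi_1,\,\xi_{n+1},\xi_{n+2},\dots\}]$. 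A direct norm computation shows $\|p_n\eta\|\to0$ for every $\eta$, so $(p_n)$ is $\sigma$-strongly null; on the other hand, for $i<j$ the ranges of $p_{n_i}$ and $p_{n_j}$ jointly contain $\xi_{n_j}$ and $\xi_{n_j}+\tfrac1{n_j}\xi_1$, hence $\xi_1$, so every tail supremum dominates the rank-one projection $q$ onto $[\{\xi_1\}]$; consequently any decreasing $(a_i)$ in $\mathcal E(\mathcal M)$ witnessing order convergence of a subsequence would satisfy $a_i\ge\bigvee_{j\ge i}p_{n_j}\ge q$ by \cite[Lemma 2.8]{CHW}, contradicting $\bigwedge_i a_i=0$. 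Beware also that your guiding inference ``$z_m\ge\eta e_{n_k}$ for all $k\ge m$ implies $z_m\ge\eta\bigvee_{k\ge m}e_{n_k}$'' is \emph{false} for $\eta<1$, even with $z_m\le\mathds 1$: in $M_2$ take non-orthogonal rank-one projections $p,q$ and $b=\eta(p+q)$ with $\eta\le\frac12$; then $b\ge\eta p$ and $b\ge\eta q$, but $b\ge\eta(p\vee q)=\eta\mathds 1$ fails because $p+q$ has an eigenvalue strictly less than $1$. The lemma holds only at level $\eta=1$ for effects against projections ($b\le\mathds 1$ and $b\ge p$ force the range of $p$ into the $1$-eigenspace of $b$), which is precisely why the counterexample must consist of projections and why the paper invokes \cite[Lemma 2.8]{CHW} in exactly that form. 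Since your planned example is a sequence of projections, this error only infects your motivation, but the construction itself remains the missing essential step.

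The rest of your proposal is sound and close to the paper. Your explicit proof that $\mathcal E(\mathcal M)$ is monotone order separable (via a faithful normal state), giving $\tau_o=\tau_{os}$ and hence the clean reformulation of (ii) through \cite[Proposition 2]{BCW}, is a nice touch that the paper leaves partly implicit. Your (i)$\Rightarrow$(ii) is the paper's argument --- trace, Cauchy--Schwarz, Chebyshev at thresholds $2^{-k}$, $\sigma$-subadditivity on the tail suprema of the spectral supports, faithfulness --- with one slip: $z_k=\varepsilon_k\mathds 1+q_k$ has norm $1+\varepsilon_k$ whenever $q_k\ne0$, so it lies outside the poset $\mathcal E(\mathcal M)$, whereas the definition of order convergence in $\mathcal E(\mathcal M)$ requires the dominating sequence (and its infimum) to live in $\mathcal E(\mathcal M)$. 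The repair is exactly the paper's choice $y_i$: take $z_k=\varepsilon_k(\mathds 1-q_k)+q_k=\varepsilon_k\mathds 1+(1-\varepsilon_k)q_k\in\mathcal E(\mathcal M)$; it still dominates $x_{n_k}$ because $x_{n_k}\le\varepsilon_k(\mathds 1-p_k)+p_k$ and $p_k\le q_k$; it is decreasing since $z_k-z_{k+1}\ge(\varepsilon_k-\varepsilon_{k+1})(\mathds 1-q_{k+1})\ge0$; and $\bigwedge_k z_k=0$ as before. With that one-line fix the first half is complete; without the construction in the second half, the theorem is only half proved.
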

\begin{proof}
$(i) \Rightarrow (ii)$. Let $(x_n)_{n\in\nn}$ be a sequence in $\Cl E(\Cl M)$ converging $\sigma$-strongly to $0$.  To prove the  convergence in the order topology  we need  to find a subsequence $(x_{n_i})_{i\in\nn}$ of $(x_n)_{n\in\nn}$ that order converges to $0$.  Since $\Cl M$ is finite and $\sigma$-finite, $\Cl M$ admits a faithful, normal, tracial state,  $\psi$.   By the assumption we can  extract a subsequence $(x_{n_i})_{i\in\nn}$ of $(x_n)_{n\in\nn}$ such that \[\psi(x_{n_i})\le\sqrt{\psi(x_{n_i}^2)}<4^{-i}\,.\]
  For each $i\in\nn$ and $\lambda\in\rr$, let $e^i(\lambda)$ be the spectral projection in $\Cl M$, of $x_{n_i}$,  corresponding to $\lambda$.
  (That is,   $e^i(\lambda)=\chi_{(-\8, \l]}(x_{n_i})$.)
  Then
\begin{align*}
0\le x_{n_i}&\le 2^{-i}\,e^i(2^{-i})+(\mathds 1-e^i(2^{-i}))\\
&=2^{-i}\biggl(\bigwedge_{j\ge i}e^j(2^{-j})+e^i(2^{-i})-\bigwedge_{j\ge i}e^j(2^{-j})\biggr)+(\mathds 1-e^i(2^{-i}))\\
&\le 2^{-i}\bigwedge_{j\ge i} e^j(2^{-j})+e^i(2^{-i})-\bigwedge_{j\ge i} e^j(2^{-j})+\mathds 1-e^i(2^{-i})\\
&=2^{-i}\bigwedge_{j\ge i} e^j(2^{-j})+\bigvee_{j\ge i}\bigl(\mathds 1-e^{j}(2^{-j})\bigr).
\end{align*}
Let
\[y_i=2^{-i}\bigwedge_{j\ge i} e^j(2^{-j})+\bigvee_{j\ge i}\bigl(\mathds 1-e^{j}(2^{-j})\bigr).\]
Then $0\le x_{n_i}\le y_i\le\mathds 1$.
    Let us verify that the sequence $(y_i)_{i\in\nn}$ is decreasing (see also Lemma 5.2 in \cite{CHW}).   Indeed,

\begin{align*}
y_i-y_{i+1}\ =\ &2^{-i}\bigwedge_{j\ge i}e^j(2^{-j})\ +\ \bigvee_{j\ge i}\bigl(\mathds 1-e^{j}(2^{-j})\bigr)\\
&\qquad\qquad\qquad \ \ -\  2^{-i-1}\bigwedge_{j\ge i+1} e^j(2^{-j})\ -\ \bigvee_{j\ge i+1}\bigl(\mathds 1-e^{j}(2^{-j})\bigr)\\
\ge\  &2^{-i}\bigwedge_{j\ge i}e^j(2^{-j})\ -\ 2^{-i-1}\bigwedge_{j\ge i+1}e^j(2^{-j})\\
=\  &2^{-i}\biggl(\bigwedge_{j\ge i}e^j(2^{-j})\ -\ \bigwedge_{j\ge i+1}e^j(2^{-j}\biggr)\ +\ (2^{-i}-2^{-i-1})\bigwedge_{j\ge i+1}e^j(2^{-j})\\
\ge\  &0\,.
\end{align*}

 Thus, $\bigwedge_{i\in\nn}y_i$ exists in $\Cl E(\Cl M)$.  The normality of $\psi$ entails that $\psi\bigl(\bigwedge_{i\in\nn}y_i\bigr)=\lim_{i\to\infty}\psi(y_i)$. Since $2^{-j}\bigl(\mathds 1-e^j(2^{-j})\bigr)\le x_{n_j}$, it follows that $\psi\bigl(\mathds 1-e^j(2^{-j})\bigr)\le 2^j\psi(x_{n_j})<2^{-j}$.

Since $\psi$ is $\sigma$-subadditive we can estimate:
\[\psi(y_i)\le 2^{-i}+\sum_{j\ge i}\psi\bigl(\mathds 1-e^j(2^{-j})\bigr)<2^{-i}+\sum_{j\ge i}2^{-j}=3\cdot 2^{-i}.\]
Thus, $\psi\bigl(\bigwedge_{i\in\nn}y_i\bigr)=0$ and therefore, since $\psi$ is faithful, it follows that $\bigwedge_{i\in\nn}y_i=0$. Consequently,  $(x_{n_i})_{i\in\nn}$ is order-convergent to $0$.

 $(ii) \Rightarrow (i)$. If \Cl M{} is not finite then it has a properly infinite direct summand $\Cl N$. Furthermore,  by the structure theory, \Cl N{}  contains a unital von Neumann subalgebra \Cl R{} that is isomorphic  to $B(H)$ for some infinite-dimensional separable \HS\ $H$.  We shall identify $\Cl R$ with $B(H)$.  Fix an orthonormal basis $(\xi_n)$ of $H$ and let $p_n$ be  the  projection of $H$ onto
$[\{ \xi_n+\frac 1n \xi_1, \xi_{n+1}, \xi_{n+2},\ldots \}]$. For  every $\eta\in H$ let us denote by $\al_n=\langle \eta, \xi_n\rangle$ the coordinates of $\eta$ with respect to  $(\xi_n)$. Then,  an easy computation gives
\[ \nor{p_n\eta}^2=  \frac{|\frac 1n\al_1+\al_n|^2}{\frac 1 {n^2}+1} + \sum_{i=n+1}^\8 |\al_i|^2\,,\]
showing that the sequence $(p_n)$ goes to zero in the strong operator topology and therefore in $s(\Cl R,\Cl R_*)$.  Since the restriction to \Cl R{} of the $\sigma$-strong topology $s(\Cl M,\Cl M_*)$ is equal to $s(\Cl R,\Cl R_*)$, it follows that $(p_n)$ is a  sequence in $\Cl E(\Cl M)$ that is null with respect to $s(\Cl M,\Cl M_*)$.  Let us show that  this sequence cannot be a null sequence with respect to the topology $\tau_o(\Cl E(\Cl M))$. Suppose, for a contradiction, that there is a subsequence $(p_{n_i})$ that is order converging to zero. Let $(a_n)$ be a decreasing sequence  in $\Cl E(\Cl M)$ with zero infimum that is  witnessing this fact.  Denote by $q$ the projection of $H$ onto $[\{\xi_1\}]$.  Then
 \[ p_{n_i}\le a_i\qquad\text{for every }i\in\nn\]
 implies that $a_i\ge p_{n_j}$ for all $j\ge i$ and therefore, in view of   \cite[Lemma 2.8]{CHW}, we have
 \[ a_i\ge \bigvee_{j\ge i} p_{n_j}\ge q\,,\]
i.e. the infimum of the sequence $(a_n)$ cannot be zero - a contradiction.
\end{proof}

\section{Order topology on dual spaces}

In this section we show that the order topology on the dual space is equal to the norm topology.

If $\Cl K\subset \Cl A^*_s$ is order bounded, i.e. there exists $\psi\in \Cl A^*_+$ such that
\[ -\psi\le\f\le\psi\,,\]
for every $\f\in \Cl K$; then
\[\Vert\psi\Vert\ge\psi(x_+)\ge\f(x_+)\ge\f(x)\ge-\f(x_-)\ge-\psi(x_-)\ge -\Vert\psi\Vert\]
for every $x\in \Cl A_s$ satisfying $\Vert x\Vert \le 1$ and for every $\f\in\Cl K$. ($x_+$ and   $x_-$ denotes positive and negative part of $x$, respectively.) Thus, $\Cl K$ is norm bounded.  On the other-hand, if $\Cl A$ is infinite-dimensional, it contains a sequence $(a_n)$ of  positive elements satisfying $\Vert a_i a_j\Vert=\delta_{ij}$.  So, if $\f_i$ is a state on $\Cl A$ satisfying $\f_i(a_j)=\delta_{ij}$, it is easily seen that $\{\f_i:i\in\nn\}$ is a norm bounded subset of $\Cl A^*_+$ that is not order bounded.

\begin{lemma}\label{1.11}
Let $(\f_\al)$ be a monotonic increasing and norm bounded net  of hermitian elements in $\Cl M_\ast$. Then there is a unique normal hermitian functional \f{} on $\Cl M$ \st{}
  \begin{equation}\label{haj} \f(x)=\sup_\al  \f_\al(x)
  \end{equation}
for all positive elements $x\in \Cl M$. Moreover, $\f$ is the norm limit of the net $(\f_\al)$.
\end{lemma}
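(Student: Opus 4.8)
The plan is to prove that the net $(\f_\al)$ is norm-Cauchy; since $\Cl M_*$ is a Banach space, its norm limit $\f$ will then automatically be normal and hermitian, and both the supremum formula and uniqueness will follow readily from norm convergence. The crucial ingredient is the elementary fact that a positive normal functional $\psi$ satisfies $\|\psi\|=\psi(\mathds 1)$, so that for positive functionals $\psi_1\le\psi_2$ one has $\|\psi_2-\psi_1\|=(\psi_2-\psi_1)(\mathds 1)=\psi_2(\mathds 1)-\psi_1(\mathds 1)$. This reduces control of norm differences to control of the scalar net $(\psi_\al(\mathds 1))$, which is where monotonicity and boundedness will be exploited.

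First I would reduce to the positive case. Fixing an index $\al_0$ and passing to the cofinal tail $\{\al:\al\ge\al_0\}$, I replace $\f_\al$ by $\f_\al-\f_{\al_0}$. Each such element is normal, being a difference of normal functionals, and, because the net is increasing, positive; moreover the shifted net stays increasing and norm bounded, by $2\sup_\al\|\f_\al\|$. Since convergence of a net depends only on its tail, it suffices to prove the claim for this increasing, norm-bounded net of \emph{positive} normal functionals, which I continue to denote $(\f_\al)$.

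Next I would establish the Cauchy property. The scalar net $(\f_\al(\mathds 1))$ is increasing and bounded above, hence converges to its supremum $L$. Given $\e>0$, choose $\al_1$ with $\f_{\al_1}(\mathds 1)>L-\e$. For any $\beta,\gamma\ge\al_1$ pick $\delta$ dominating both, the index set being directed; then $\f_\delta\ge\f_\beta$ and $\f_\delta\ge\f_\gamma$, so using the norm identity above,
\[ \|\f_\beta-\f_\gamma\|\le\|\f_\delta-\f_\beta\|+\|\f_\delta-\f_\gamma\|=\bigl(\f_\delta(\mathds 1)-\f_\beta(\mathds 1)\bigr)+\bigl(\f_\delta(\mathds 1)-\f_\gamma(\mathds 1)\bigr)<2\e, \]
because $L-\e<\f_\beta(\mathds 1),\f_\gamma(\mathds 1)\le\f_\delta(\mathds 1)\le L$. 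Thus $(\f_\al)$ is norm-Cauchy and, by completeness of $\Cl M_*$, converges in norm to some $\f\in\Cl M_*$. As a norm limit of normal (respectively hermitian) functionals, $\f$ is itself normal and hermitian, and undoing the shift yields the norm limit of the original net.

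Finally I would verify the remaining assertions. Norm convergence forces pointwise convergence, $\f(x)=\lim_\al\f_\al(x)$ for every $x\in\Cl M$; when $x$ is positive the net $(\f_\al(x))$ is increasing, so its limit equals its supremum, giving $\f(x)=\sup_\al\f_\al(x)$. For uniqueness, any normal hermitian $\f'$ satisfying the same supremum formula agrees with $\f$ on $\Cl M_+$; since every self-adjoint element is a difference of two positive elements and $\Cl M=\Cl M_s+i\,\Cl M_s$, linearity forces $\f'=\f$. I expect no serious obstacle: the argument hinges entirely on the identity $\|\psi\|=\psi(\mathds 1)$ for positive normal $\psi$, and the only point requiring care is handling the directed index set correctly in the Cauchy estimate rather than treating the net as a sequence.
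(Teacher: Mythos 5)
Your proof is correct, and it takes a somewhat different route from the paper's, though both hinge on the same key identity $\nor{\psi}=\psi(\mathds 1)$ for positive normal $\psi$. The paper constructs the limit first: it defines $\f$ on $\Cl M_+$ by $\f(x)=\sup_\al\f_\al(x)$, extends by linearity (using that $\Cl M$ is linearly spanned by its positive elements), checks boundedness on the positive part of the unit ball, and only then deduces norm convergence from $\nor{\f-\f_\al}=(\f-\f_\al)(\mathds 1)\to 0$, with normality following from the norm-closedness of $\Cl M_*$ in $\Cl M^*$. You instead prove the net is norm-Cauchy directly -- via the tail-shift to positive functionals and the directedness argument at $\mathds 1$ -- and obtain the limit from completeness of $\Cl M_*$, deriving the supremum formula afterwards from pointwise convergence of an increasing scalar net. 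What your route buys is that you never have to verify that the pointwise supremum is additive and positively homogeneous on $\Cl M_+$ and extends to a well-defined bounded linear form (a step the paper compresses into one sentence); what it costs is the extra bookkeeping of the reduction to the positive case and the three-index Cauchy estimate, which you handle correctly, including the point that a convergent tail forces convergence of the whole net. Both proofs are complete; yours is marginally longer but arguably more self-contained, since existence of the limit comes for free from Banach-space completeness rather than from an ad hoc construction.
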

\begin{proof}
The net  of real numbers $(\f_\al(x))$ is increasing and bounded for every $x\in\Cl M_+$.
Since $\Cl M$ is linearly generated by its positive elements,  it follows that there is a unique linear form \f{} on $\Cl M$ satisfying (\ref{haj}).
Moreover, \f{} is bounded on  the positive part of the unit ball and so it is a bounded functional. Using the fact that $\f-\f_\al$ is positive for each $\al$ we have that
\[ \nor{\f-\f_\al}= \f(\mathds 1)-\f_\al(\mathds 1)\,,\]
which goes to zero. Therefore, $\f$  is the norm limit of $(\f_\al)$. Employing  the fact that the space of normal functionals is norm closed,  we conclude that $\f$ is a normal functional.


\end{proof}

Clearly, the dual assertion holds for a monotonic decreasing net.  We recall that a poset $(P,\le)$ is \emph{conditional monotone complete} if every monotonic increasing net (or monotonic decreasing net) having an upper bound (resp. a lower bound) has a supremum (resp. an infimum).  Thus, the lemma implies  that $(M_\ast^s, \le)$ is conditional monotone complete.


\begin{lemma}\label{1}
Suppose that a net $(\f_\al)$ order converges to $\f$ in $\Cl M_\ast^s$. Then $(\f_\al)$ converges to \f{} in norm.
\end{lemma}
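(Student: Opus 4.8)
The plan is to unpack the definition of order convergence and then reduce everything to \lemref{1.11} together with the elementary fact that a positive functional on a unital \Ca{} attains its norm at $\mathds 1$. By the very definition of order convergence in $\Cl M_\ast^s$, there are nets $(\psi_\al)$ and $(\chi_\al)$ of hermitian normal functionals with $\psi_\al \le \f_\al \le \chi_\al$ for every $\al$, and with $\psi_\al \uparrow \f$ and $\chi_\al \downarrow \f$. In particular $\psi_\al \le \f \le \chi_\al$ for all $\al$, so after fixing one index $\al_0$ and passing to the cofinal tail $\{\al : \al \ge \al_0\}$ we have $\psi_{\al_0} \le \psi_\al \le \f \le \chi_\al \le \chi_{\al_0}$; thus both tails are order bounded, and hence norm bounded by the observation preceding \lemref{1.11}.

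First I would show that $(\psi_\al)$ and $(\chi_\al)$ both converge to $\f$ in norm. The net $(\psi_\al)$ is increasing and norm bounded, so \lemref{1.11} produces a normal hermitian functional equal to $\sup_\al \psi_\al(x)$ on positive $x$, which is exactly the poset supremum $\bigvee_\al \psi_\al = \f$; the lemma moreover asserts that this functional is the norm limit of $(\psi_\al)$. Applying the dual statement (recorded right after \lemref{1.11}) to the decreasing norm bounded net $(\chi_\al)$ gives $\chi_\al \to \f$ in norm as well. So $\nor{\f - \psi_\al} \to 0$ and $\nor{\chi_\al - \f} \to 0$.

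Then I would close with a squeezing estimate. Since $\psi_\al \le \f_\al \le \chi_\al$, both $\f_\al - \psi_\al$ and $\chi_\al - \psi_\al$ are positive, and a positive normal functional $g$ satisfies $\nor{g} = g(\mathds 1)$. Hence $\nor{\f_\al - \psi_\al} = (\f_\al - \psi_\al)(\mathds 1) \le (\chi_\al - \psi_\al)(\mathds 1) = \nor{\chi_\al - \psi_\al} \le \nor{\chi_\al - \f} + \nor{\f - \psi_\al}$, which tends to $0$ by the previous paragraph. The triangle inequality $\nor{\f_\al - \f} \le \nor{\f_\al - \psi_\al} + \nor{\psi_\al - \f}$ then finishes the argument.

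The computations here are routine; the only point needing care is the passage from the order-theoretic hypothesis to a situation where \lemref{1.11} genuinely applies. Concretely, I must check that the poset supremum $\f = \bigvee_\al \psi_\al$ coincides with the functional $x \mapsto \sup_\al \psi_\al(x)$ built by \lemref{1.11} — this is guaranteed by the conditional monotone completeness of $\Cl M_\ast^s$ noted after that lemma, which makes the supremum unique — and that the squeezing nets are norm bounded, which is where restricting to a cofinal tail and invoking order boundedness is needed. Everything else is just the translation of ``norm $=$ value at $\mathds 1$'' for positive functionals into the final estimate.
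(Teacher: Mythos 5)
Your proof is correct and follows essentially the same route as the paper's: both arguments apply \lemref{1.11} (and its dual form) to the two squeezing nets to conclude that they converge to \f{} in norm, and then trap $\f_\al$ between them. The only cosmetic difference is the closing estimate --- you evaluate the positive differences at $\mathds 1$ via $\nor{g}=g(\mathds 1)$, whereas the paper bounds $\sup_{x\in\Cl M_1^+}|\f_\al(x)-\f(x)|$ and then decomposes unit-ball elements into effects --- and your explicit verification of norm boundedness of the tails (needed to invoke \lemref{1.11}) makes precise a point the paper leaves implicit.
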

\begin{proof}
Suppose that
\[\psi_\al'\le \f_\al\le \psi_\al\,,\]
in $\Cl M_*^s$ and
 $\psi_\al\downarrow \f$,  $\psi_\al' \uparrow \f$. Then by the previous proposition  $\psi_\al, \psi_\al'\to \f$
in norm. For every $x\in \Cl M_1^+$  and for every $\al$ we have
\[ \psi'_\al(x)\le \f_\al(x)\le \psi_\al(x) \quad\text{and}\quad\psi'_\al(x)\le \f(x)\le \psi_\al(x)\,.\]
This implies that
\[ |\f_\al(x)-\f(x)|\le |\psi_\al(x)-\psi_\al'(x)|\,.\]
Thus,
\[\sup_{x\in \Cl M_1^+}  |\f_\al(x)-\f(x)|\le \sup_{x\in \Cl M_1^+} |\psi_\al(x)-\psi_\al'(x)|\to 0\quad\text{with }\alpha.\]
Since every element in  the unit ball can be written as $x_1-x_2+i(x_3-x_4)$, where $x_i\in \Cl E(\Cl M)$ ($i=1,2,3,4$) it follows that $\f$ is the norm limit of $(\f_\al)$.
\end{proof}


The function $f$  mapping a poset $(P,\le)$ into another poset $(Q,\le)$ is \emph{order-continuous} if $(f(x_\gamma))$ order converges to $f(x)$ whenever $(x_\gamma)$ is a net in $P$ that is order convergent to $x$.  It is easy to verify that if $f$ is order-continuous, then $f$ is continuous with respect to $\tau_o(P)$ and $\tau_o(Q)$.  When $f(a)\le f(b)$ for every $a,b\in P$ satisfying $a\le b$, we say that $f$ is \emph{order-preserving} or \emph{isotone}.

\begin{proposition}\label{1.12} Let $\Cl M$ and $\Cl N$  be  von Neumann algebras, and let $A$ and $B$ be closed subsets of $\Cl{M}_\ast^s$ and $\Cl N_\ast^s$, respectively.  The following assertions hold.
\begin{enumerate}
  \item $\tau_o(\Cl{M}_\ast^s)|A\subset \tau_o(A)$
  \item If $f:A\to B$ is order-preserving and norm-continuous, then it is continuous with respect to $\tau_o(A)$ and $\tau_o(B)$.
\end{enumerate}
\end{proposition}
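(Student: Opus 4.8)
The plan is to reduce both assertions to a single \emph{key claim}: on a norm-closed subset of $\Cl M_\ast^s$, suprema and infima of monotone order-bounded nets are computed exactly as in the ambient space. Precisely, if $(y_\al)$ is an increasing net in $A$ whose supremum $\f$ exists in the poset $A$, then $\f=\sup_\al y_\al$ computed in $\Cl M_\ast^s$ as well, and $y_\al\to\f$ in norm; dually for decreasing nets. Everything else is bookkeeping once this is available, so I would isolate and prove it first.

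To establish the key claim, passing to a cofinal tail I may assume $(y_\al)$ is order bounded (between some $y_{\al_0}$ and $\f$), hence norm bounded by the observation opening this section. By \lemref{1.11} the increasing norm-bounded net $(y_\al)$ converges in norm to a normal hermitian functional $\tilde y=\sup_\al y_\al$, the supremum taken in $\Cl M_\ast^s$; since each $y_\al\in A$ and $A$ is norm closed, $\tilde y\in A$. Now $\f$ is an upper bound of $(y_\al)$ in $A\subset\Cl M_\ast^s$, so $\tilde y\le\f$; conversely $\tilde y\in A$ is an upper bound in $A$, so $\f\le\tilde y$ because $\f$ is the \emph{least} upper bound in $A$. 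Hence $\tilde y=\f$. This step is the main obstacle of the whole proposition: it is precisely the interplay of the monotone completeness of $\Cl M_\ast^s$ given by \lemref{1.11} with the norm-closedness of $A$ that forces the two notions of supremum to agree. Without closedness the ambient supremum could leave $A$ and the inclusion of topologies would fail, as the earlier remark anticipates.

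For (i), let $(\f_\al)$ order converge to $\f$ in $A$, with witnesses $y_\al\le\f_\al\le z_\al$, $y_\al\uparrow\f$, $z_\al\downarrow\f$ in $A$. By the key claim the same witnesses satisfy $y_\al\uparrow\f$ and $z_\al\downarrow\f$ in $\Cl M_\ast^s$, so $(\f_\al)$ order converges to $\f$ in $\Cl M_\ast^s$. In other words the inclusion $\iota\colon A\hookrightarrow\Cl M_\ast^s$ is order-continuous, hence continuous for $\tau_o(A)$ and $\tau_o(\Cl M_\ast^s)$ by the cited fact. Since $\iota^{-1}(V)=V\cap A$, this says exactly that every set $V\cap A$ with $V\in\tau_o(\Cl M_\ast^s)$ lies in $\tau_o(A)$, i.e. $\tau_o(\Cl M_\ast^s)|A\subset\tau_o(A)$.

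For (ii) I would again verify order-continuity of $f$ and invoke the same cited fact. With $(\f_\al)$ and its witnesses as above, the key claim gives $y_\al,z_\al\to\f$ in norm, so norm-continuity of $f$ yields $f(y_\al),f(z_\al)\to f(\f)$ in norm; and since $f$ is isotone, $(f(y_\al))$ is increasing, $(f(z_\al))$ is decreasing, and $f(y_\al)\le f(\f_\al)\le f(z_\al)$. Applying \lemref{1.11} in $\Cl N_\ast^s$ to the increasing norm-convergent net $(f(y_\al))$ shows $f(\f)=\sup_\al f(y_\al)$ in $\Cl N_\ast^s$; as $f(\f)\in B$, it is a fortiori the supremum in $B$, i.e. $f(y_\al)\uparrow f(\f)$ in $B$, and dually $f(z_\al)\downarrow f(\f)$ in $B$. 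Thus $(f(\f_\al))$ order converges to $f(\f)$ in $B$, so $f$ is order-continuous and (ii) follows. Beyond the key claim I expect only two routine points: the passage to a cofinal tail to secure norm-boundedness, and the elementary remark that an ambient supremum which happens to lie in a subset is automatically the supremum computed in that subset.
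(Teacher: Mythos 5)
Your proof is correct, and for assertion (ii) it is essentially the paper's own argument: reduce to monotone nets, use Lemma~\ref{1.11} together with norm-closedness to identify the supremum taken in $A$ with the ambient supremum and to get norm convergence of the witnesses, then combine isotonicity with norm continuity of $f$ and Lemma~\ref{1.11} applied in $\Cl N_\ast^s$ to see that $f(\f)$ is the supremum of $(f(\f_\gamma))$ (your observation that an ambient supremum lying in a subset is automatically the supremum in that subset is exactly the closing step of the paper's proof). Where you genuinely diverge is assertion (i): the paper does not prove it directly, but instead notes via Lemma~\ref{1} that order convergence in $\Cl M_\ast^s$ implies norm convergence, so that the norm-closed set $A$ is order-closed, and then invokes the general poset result \cite[Proposition 2.3]{CHW} for order-closed subsets of conditionally monotone complete posets. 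You instead inline this: your ``key claim'' (suprema and infima of monotone nets computed in $A$ agree with those computed in $\Cl M_\ast^s$) makes the inclusion $A\hookrightarrow\Cl M_\ast^s$ order-continuous, and (i) follows from the remark preceding the proposition. This buys a self-contained and unified treatment of both assertions from a single lemma, and it shows that norm-closedness is used directly, without passing through order-closedness; the paper's route is shorter on the page but outsources the combinatorics to the cited proposition. Two small glosses you rightly flagged as routine --- passing to a tail to convert order-boundedness of an increasing net bounded above into the norm-boundedness required by Lemma~\ref{1.11} --- are glossed in the paper too (``in particular $\f_\gamma$ is norm-bounded''), so no gap there. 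One further remark: your argument never actually uses the closedness of $B$, and indeed the paper's appeal to it (``Since $B$ is assumed to be closed, $f(\f)$ must be in $B$'') is redundant, since $f$ maps into $B$ by hypothesis.
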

\begin{proof}
By Lemma \ref{1} it follows that $A$ is closed with respect to $\tau_o(\Cl M_\ast^s)$.  Since $\Cl M_\ast^s$ is conditional monotone complete, the first assertion follows by \cite[Proposition 2.3]{CHW}.  For the second assertion it suffices to show that if $\f_\gamma\uparrow \f$  and $\psi_\gamma\downarrow \f$ in $A$, then $f(\f_\gamma)\uparrow f(\f)$ and $f(\psi_\gamma)\downarrow f(\f)$ in $B$.  We shall prove the assertion for $(\f_\gamma)$; the other follows by a dual argument. If $\f_\gamma$ is monotonic increasing and bounded above by $\f$ (in particular $\f_\gamma$ is norm-bounded) then it follows, by Lemma \ref{1.11}, that $(\f_\gamma)$ is norm-convergent to its supremum taken in $\Cl{M}_*^s$.  Our assumption on the closure of $A$ allows us to deduce that $\f$ must be equal to this supremum and that therefore (again by Lemma \ref{1.11}) $(\f_\gamma)$ is norm-convergent to $\f$.  Since $f$ is order-preserving, it follows that $\bigl(f(\f_\gamma)\bigr)$ is monotonic increasing and bounded above by $f(\f)$.  By Lemma \ref{1.11}, we know that $\bigl(f(\f_\gamma)\bigr)$ is norm-convergent to its supremum taken in $\Cl N_\ast^s$.  Our assumption on the continuity of $f$ (and the Hausdorffness of the norm-topology) gives us that this supremum must be equal to $f(\f)$.  Since $B$ is assumed to be closed, $f(\f)$ must be in $B$ and thus, $f(\f)$ must be equal to the supremum of
$\bigl(f(\f_\gamma)\bigr)$ taken in $B$.
\end{proof}

\begin{theorem}\label{1.1} Let \Cl M{} be \vNa. Then
\[ \tau_{\nor{\cdot }}(\Cl M_*^s)= \tau_o(\Cl M_\ast^s)= \tau_{os}(\Cl M_\ast^s)\,.\]
In particular, $\Cl M_*^s$ is monotone order separable.
\end{theorem}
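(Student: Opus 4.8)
The plan is to sandwich the three topologies into a cycle of inclusions
\[\tau_{\nor{\cdot}}(\Cl M_*^s)\subseteq \tau_o(\Cl M_\ast^s)\subseteq \tau_{os}(\Cl M_\ast^s)\subseteq \tau_{\nor{\cdot}}(\Cl M_*^s),\]
whence all three coincide. The middle inclusion $\tau_o\subseteq\tau_{os}$ is recorded in the introduction and holds for every poset, so only the two outer inclusions require work, and I would argue both at the level of closed sets. For $\tau_{\nor{\cdot}}\subseteq\tau_o$, note that by \lemref{1} every order-convergent net in $\Cl M_\ast^s$ is norm-convergent to the same limit; hence any norm-closed set is automatically order-closed, and since the order-closed sets are precisely the $\tau_o$-closed sets, the family of norm-closed sets sits inside the family of $\tau_o$-closed sets, giving the inclusion. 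This step is essentially free once \lemref{1} is available.

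The substance of the theorem lies in the reverse inclusion $\tau_{os}\subseteq\tau_{\nor{\cdot}}$, for which it suffices to show that every sequentially order-closed set is norm-closed; and this reduces to the following key claim: \emph{every norm-convergent sequence in $\Cl M_\ast^s$ admits a subsequence that order converges to the same limit}. Granting the claim, if $X$ is sequentially order-closed and $\f_n\to\f$ in norm with $\f_n\in X$, then a suitable subsequence of $(\f_n)$ order converges to $\f$, forcing $\f\in X$; thus $X$ is norm-closed, and the resulting inclusion of closed-set families yields $\tau_{os}\subseteq\tau_{\nor{\cdot}}$.

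To prove the key claim — which I expect to be the main obstacle — I would first pass to a subsequence, relabelled $(\f_n)$, with $\nor{\f_n-\f}\le 2^{-n}$, and then exploit the absolute value $|\f_n-\f|$, which is a normal positive functional of the same norm satisfying $-|\f_n-\f|\le \f_n-\f\le |\f_n-\f|$, since $\f_n-\f$ is hermitian. Because $\sum_n\nor{|\f_n-\f|}<\infty$ and the normal functionals form a norm-closed subspace, the tails $\sum_{k\ge n}|\f_k-\f|$ are well-defined normal positive functionals tending to $0$ in norm. Setting
\[ z_n=\f+\sum_{k\ge n}|\f_k-\f|,\qquad y_n=\f-\sum_{k\ge n}|\f_k-\f|,\]
one checks the sandwich $y_n\le \f_n\le z_n$, that $(z_n)$ is decreasing and bounded below by $\f$, and that $(y_n)$ is increasing and bounded above by $\f$. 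By \lemref{1.11} (and its dual) each of these monotone, norm-bounded sequences converges in norm to its infimum, respectively supremum, taken in $\Cl M_\ast^s$; since both also converge in norm to $\f$ and norm limits are unique, we conclude $z_n\downarrow\f$ and $y_n\uparrow\f$. This exhibits the order convergence $\f_n\to\f$ and establishes the claim.

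With the three displayed topologies now equal, we have in particular $\tau_o(\Cl M_\ast^s)=\tau_{os}(\Cl M_\ast^s)$, so the monotone order separability of $\Cl M_\ast^s$ follows at once from \cite[Proposition 3]{BCW}. The only delicate point throughout is the verification that the monotone sequences $(y_n)$ and $(z_n)$ have the asserted order limits; everything else is a formal comparison of closed-set families, powered by \lemref{1} and \lemref{1.11}.
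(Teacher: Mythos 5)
Your proof is correct and follows essentially the same route as the paper: Lemma~\ref{1} yields $\tau_{\nor{\cdot}}(\Cl M_*^s)\subseteq\tau_o(\Cl M_\ast^s)$, and your tail-sums $\sum_{k\ge n}|\f_k-\f|$ after passing to a subsequence with $\nor{\f_n-\f}\le 2^{-n}$ are exactly the paper's sandwiching construction, with \lemref{1.11} and \cite[Proposition 3]{BCW} used in the same roles. The only cosmetic difference is that the paper proves the slightly stronger statement that every norm-convergent sequence converges in $\tau_{os}(\Cl M_\ast^s)$ (implicitly via the subsequence criterion of \cite[Proposition 2]{BCW}), whereas you argue directly with sequentially order-closed sets, where a single order-convergent subsequence suffices.
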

\begin{proof}
By Lemma~\ref{1} we know that the order topology $\tau_o(\Cl M_\ast^s)$ is finer than the norm topology.  We show that $\tau_{\nor{\cdot }}(\Cl M)\supset \tau_{os}(\Cl M_\ast^s)$.  In fact, we show a bit more.  We prove that if $(\f_n)$
 is a sequence in $\Cl M_*^s$ that is norm converging to \f{}, then it converges to $\f$ with respect to $\tau_{os}(\Cl M_\ast^s)$.  To this end, we need to show that $(\f_n)$ contains a subsequence that order converges
to \f. But passing to a suitable subsequence and making a little abuse of notation,  we can suppose that
\[ \nor{\f_n-\f}\le 2^{-n}\qquad\text{for every }n\in\nn\,.\]
Put 
\[ v_i=\sum_{n\ge i} |\f_n-\f|\,.\]
Clearly,
\[-v_n\le -|\f_n-\f| \le\f_n-\f \le |\f_n-\f|\le v_n\]
and so
\[ \f-v_n\le \f_n\le \f+ v_n\,. \]
Since $v_n\downarrow 0$ we have $\f+v_n\downarrow \f$ and $\f-v_n\uparrow \f$.  Thus $(\f_n)$ order converges to $\f$.  That $\Cl M_*^s$ is monotone order separable, follows from \cite[Proposition 3]{BCW}.
\end{proof}

The dual space of a \Ca{} is a special case of a predual  of a \vNa{}.  Therefore,  the norm topology coincides with the order topology on the hermitian part of the dual of every \Ca.

\begin{corollary}
Let \Cl A{} be a \Ca. Then
\[ \tau_{\nor{\cdot }}(\Cl A)=  \tau_o(\Cl A^\ast_s)=\tau_{os}(\Cl A^\ast_s)\,,\]
and $\Cl A^*_s$ is monotone order separable.
\end{corollary}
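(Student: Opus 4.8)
The plan is to realize $\Cl A^\ast$ as the predual of a von Neumann algebra and then to invoke Theorem~\ref{1.1}. By the Sherman--Takeda theorem the bidual $\dual{\Cl A}$ carries a canonical von Neumann algebra structure whose predual is isometrically $\Cl A^\ast$; so, setting $\Cl M=\dual{\Cl A}$, we have $\Cl M_\ast=\Cl A^\ast$ as Banach spaces, and in particular the two copies of the norm topology agree. It then remains only to check that the order on $\Cl A^\ast_s$ used to define $\tau_o(\Cl A^\ast_s)$ coincides with the order on $\Cl M_\ast^s$ coming from the positive cone of $\Cl M$. Once this is established the two posets are literally the same ordered set, and the conclusion follows at once from Theorem~\ref{1.1} applied to $\Cl M$.

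The heart of the matter, and the step I expect to require the most care, is this identification of the orders (together with the matching of the hermitian parts). The order $\f\le\psi$ underlying $\tau_o(\Cl A^\ast_s)$ means $\f(a)\le\psi(a)$ for every $a\in\Cl A_+$, whereas the order on $\Cl M_\ast^s$ means $\f(X)\le\psi(X)$ for every $X\in\Cl M_+$. Writing $\chi=\psi-\f$, I must show that a normal hermitian functional which is nonnegative on $\Cl A_+$ is automatically nonnegative on the whole cone $\Cl M_+=(\dual{\Cl A})_+$. I would deduce this from the fact that, in the universal representation, $\Cl A$ is $\sigma$-weakly dense in $\Cl M$, so by the Kaplansky density theorem the positive part of the unit ball of $\Cl A$ is $\sigma$-weakly dense in that of $\Cl M$. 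Hence for $X\in\Cl M_+$ there is a bounded net $(a_\gamma)$ in $\Cl A_+$ with $a_\gamma\to X$ $\sigma$-weakly; since $\chi$ is normal (hence $\sigma$-weakly continuous on bounded sets) we obtain $\chi(X)=\lim_\gamma\chi(a_\gamma)\ge 0$. The reverse implication is trivial because $\Cl A_+\subset\Cl M_+$. The very same approximation argument, applied to self-adjoint elements, shows that a functional is hermitian on $\Cl A$ exactly when it is hermitian on $\dual{\Cl A}$, so that $\Cl A^\ast_s=\Cl M_\ast^s$ as sets; thus the two orders coincide.

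With the orders identified, $(\Cl A^\ast_s,\le)$ and $(\Cl M_\ast^s,\le)$ are one and the same ordered set, whence $\tau_o(\Cl A^\ast_s)=\tau_o(\Cl M_\ast^s)$ and $\tau_{os}(\Cl A^\ast_s)=\tau_{os}(\Cl M_\ast^s)$. Theorem~\ref{1.1} applied to the von Neumann algebra $\Cl M=\dual{\Cl A}$ then gives $\tau_{\nor{\cdot}}(\Cl M_\ast^s)=\tau_o(\Cl M_\ast^s)=\tau_{os}(\Cl M_\ast^s)$ along with the monotone order separability of $\Cl M_\ast^s$. Transporting these equalities back through the isometric identification $\Cl M_\ast=\Cl A^\ast$ of the previous paragraphs yields $\tau_{\nor{\cdot}}(\Cl A)=\tau_o(\Cl A^\ast_s)=\tau_{os}(\Cl A^\ast_s)$ and the monotone order separability of $\Cl A^\ast_s$, as required.
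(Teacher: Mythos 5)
Your proof is correct and takes essentially the same route as the paper, which disposes of this corollary in one line by observing that $\Cl A^\ast$ is the predual of the enveloping von Neumann algebra $\dual{\Cl A}$ and invoking Theorem~\ref{1.1}. Your Kaplansky-density verification that the hermitian parts and the orders agree under the Sherman--Takeda identification merely makes explicit what the paper leaves implicit, and it is carried out correctly.
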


\section{Order topology on  bounded parts of dual spaces}

In this section as the underlying posets we consider intervals of $\Cl M_\ast^s$.   Let us note that  in $\Cl M_\ast^s$, translation by a fixed element is an order-isomorphism.  Therefore, given $\f$ and $\psi$ in $\Cl M_\ast^s$,   the topological spaces   $\bigl([\f,\psi], \tau_o[\f,\psi]\bigr)$ and $\bigl([0, \f-\psi], \tau_o[0,\f-\psi]\bigr)$ are homeomorphic. That is why we consider only intervals of type $[0, \f]$ for $\f\in M_\ast^+$. Let us recall the easily verified fact that every closed interval in a poset is order-closed.

We shall start by showing in Theorem \ref{2.0} that the condition $\tau_o(\Cl M_\ast^s)|[0,\f]=\tau_o[0,\f]$ for every $\f\in \Cl M_\ast^+$ is equivalent to the commutativity of $\Cl M$.  First we prove the following lemma.   We recall that via the duality $(a,b)\mapsto\tr(ab)$ the predual of the algebra of $m\times m$ complex matrices $M_m$ can be identified (as a Banach space) with $(M_m,\Vert\cdot\Vert_1)$ where $\Vert a\Vert_1=\tr(|a|)$.  If we denote by $\f_a$ the linear functional on $M_m$ associated with $a\in M_m$ (i.e. $\f_a:b\mapsto\tr(ab)$) then $a\ge 0$ if and only if $\f_a\ge 0$.  It follows therefore that the interval $[0,\mathds{1}]$ (in $M_m$) is order-isomorphic to $[0,\f_\mathds{1}]$ (in $M_m^\ast$).

 \begin{lemma}\label{lem3}
Let $\f_\mathds{1}$ denote the trace function on the algebra of $m\times m$ complex matrices $M_m$ (i.e.  $\f_{\mathds{1}}:a\mapsto\tr(a)$). Then the restriction of the norm topology to the interval $[0,\f_{\mathds{1}}]$ is not equal to  $\tau_o[0, \f_{\mathds{1}}]$.
\end{lemma}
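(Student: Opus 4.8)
The plan is to move the problem into $M_m$ itself and to display a single set that is order-closed but not norm-closed. As recalled just before the statement, the assignment $a\mapsto\f_a$ (where $\f_a(b)=\tr(ab)$) is an order-isomorphism of the effect algebra $\Cl{E}(M_m)=[0,\mathds 1]$ onto $[0,\f_{\mathds 1}]$; since it is the restriction of a linear bijection between finite-dimensional spaces, it is simultaneously a homeomorphism for the norm topologies. Hence it identifies $\tau_o[0,\mathds 1]$ with $\tau_o[0,\f_{\mathds 1}]$ and $\tau_{\nor{\cdot }}[0,\mathds 1]$ with $\tau_{\nor{\cdot }}[0,\f_{\mathds 1}]$, and it is enough to separate the order and norm topologies on $\Cl{E}(M_m)$ (we may assume $m\ge 2$, the construction requiring two basis vectors).

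Next I would build the separating sequence from rank-one projections. Fix an orthonormal basis $\xi_1,\dots,\xi_m$ of $\cc^m$, let $p_n$ be the projection onto the line spanned by $\xi_1+\frac{1}{n}\xi_2$, and let $p$ be the projection onto $\cc\xi_1$. Then $p_n\to p$ in norm, the projections $p_n$ are pairwise distinct and distinct from $p$, and $p\notin D:=\set{p_n}{n\in\nn}$; in particular $D$ is not norm-closed. The whole lemma then reduces to the single claim that $D$ is order-closed in $\Cl{E}(M_m)$: granting this, $D$ witnesses a $\tau_o$-closed set that is not $\tau_{\nor{\cdot }}$-closed, so the two topologies differ.

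To verify order-closedness I would lean on two elementary facts: if $0\le y\le r$ for a rank-one projection $r$ then $y$ is supported on the range of $r$ (test against vectors orthogonal to that range), and two rank-one projections onto distinct lines have meet $0$. Suppose a net $(p_{n(\gamma)})$ in $D$ order converges to $q$, with $a_\gamma\le p_{n(\gamma)}\le b_\gamma$, $a_\gamma\uparrow q$ and $b_\gamma\downarrow q$. If $a_\gamma=0$ for all $\gamma$ then $q=0$; but $b_\gamma\downarrow 0$ in the finite-dimensional algebra forces $\nor{b_\gamma}\to 0$, contradicting $\nor{b_\gamma}\ge\nor{p_{n(\gamma)}}=1$. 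Otherwise $a_{\gamma_0}\ne 0$ for some $\gamma_0$, and since $(a_\gamma)$ is increasing with $a_\gamma\le p_{n(\gamma)}$, the first fact forces every $a_\gamma$ (for $\gamma\ge\gamma_0$) to be supported on one fixed line; hence $n(\gamma)$ is eventually constant, say $n(\gamma)=m_0$, and squeezing $a_\gamma\le p_{m_0}\le b_\gamma$ yields $q=p_{m_0}\in D$. Thus $D$ is order-closed.

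The main obstacle is exactly this last verification: it must be carried out for arbitrary nets, and the decisive non-linear input is that distinct rank-one projections meet in $0$, which is precisely what blocks the squeezing nets from recovering the norm limit $p$. As an alternative to the net computation, I could instead check that $\Cl{E}(M_m)$ is monotone order separable --- so that $\tau_o=\tau_{os}$ by \cite[Proposition 3]{BCW} --- and then apply the subsequence characterisation of sequential order convergence from \cite[Proposition 2]{BCW}: the same meet-zero argument shows that no subsequence of $(p_n)$ order converges to $p$, so $(p_n)$ fails to converge to $p$ in $\tau_o$ although it converges to $p$ in norm.
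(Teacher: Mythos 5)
Your proposal is correct, and it proves the lemma by a genuinely different verification than the paper, even though the underlying construction is the same (a sequence of rank-one projections converging in norm to a rank-one projection onto a different line, transported to $[0,\f_{\mathds{1}}]$ via $a\mapsto\f_a$). The paper shows that the specific sequence $(\f_{p_n})$ norm-converges but fails to converge in $\tau_o[0,\f_{\mathds{1}}]$, and for this it leans on two external results from \cite{CHW}: the subsequence characterization of convergence in the order topology (Proposition 2.1) and the fact that an effect dominating two distinct rank-one projections $p_i,p_j$ dominates their join $q$ (Lemma 2.8(ii)) --- i.e.\ the obstruction is located in the \emph{upper} squeezing nets. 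You instead exhibit an explicit witness set $D=\set{p_n}{n\in\nn}$ that is order-closed but not norm-closed, and your obstruction lives in the \emph{lower} squeezing nets: a nonzero positive operator dominated by a rank-one projection pins down its range line, so either all lower bounds $a_\gamma$ vanish (whence $q=0$, contradicting $\nor{b_\gamma}\ge 1$ together with the finite-dimensional fact that $b_\gamma\downarrow 0$ forces $\nor{b_\gamma}\to 0$), or the net is eventually the constant $p_{m_0}$ and $q=p_{m_0}\in D$. Your net argument is sound --- in particular the step $a_{\gamma_0}\le a_\gamma\le p_{n(\gamma)}$ for $\gamma\ge\gamma_0$ correctly propagates the fixed lower bound to all later indices, and restricting suprema and infima to the cofinal set $\set{\gamma}{\gamma\ge\gamma_0}$ is legitimate. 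What your route buys is self-containedness (no citations to \cite{CHW} are needed) and a marginally stronger conclusion: a concrete $\tau_o$-closed, non-norm-closed set, rather than merely a non-convergent sequence; what the paper's route buys is brevity, since the cited lemmas absorb exactly the net manipulations you carry out by hand. Your closing sketch of the alternative via monotone order separability and the subsequence criterion of \cite{BCW} is essentially the paper's argument, though note that there the upper-bound mechanism (domination of the join $q$) does the work, not the meet-zero fact as you phrase it; in your main argument this looseness is harmless because you never invoke the meet. Finally, you were right to record the standing assumption $m\ge 2$, which the paper leaves implicit.
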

\begin{proof}
Let $\{\xi_1,\dots,\xi_m\}$ denote the canonical basis in $\cc^m$ and define
\[ \eta_n= \cos\theta_n\, \xi_1 + \sin \theta_n\, \xi_2\,,  \]
    where $(\theta_n)$ is a strictly increasing  sequence in $(\pi/4, \pi/2)$ with limit $\pi/2$.  Then $(\eta_n)$ converges to $\xi_2$ and any two members of this sequence are linearly independent.  Denote by $p_n$ the projection of $\cc^m$ onto $\eta_n$, by $p$ the projection onto $\xi_2$, and by $q$ the projection onto $[\{\xi_1,\xi_2\}]$.
 The linear functionals $\f_{p_n}:a\mapsto\tr(p_na)$ are in $[0, \f_{\mathds{1}}]$
and converge in norm to the functional $\f_p:a\mapsto\tr(pa)$. We shall show, however that $(\f_{p_n})$ does not converge to $\f_p$ with respect $\tau_0[0,\f_{\mathds{1}}]$.  In the light of \cite[ Proposition 2.1]{CHW} it suffices to show that no subsequence of $(\f_{p_n})$ is order-convergent to $\f_p$.  Since $\Cl{E}(M_m)$ is order-isomorphic with $[0,\f_{\mathds{1}}]$  via the association $a\mapsto \f_a$, our goal is achieved when we show that no subsequence of $(p_n)$ is order-convergent to $p$ in $\Cl{E}(M_m)$.  This latter assertion follows by \cite[Lemma 2.8 (ii)]{CHW} because if $\mathds 1\ge x$ satisfies $x\ge p_i$ and $x\ge p_j$ for some $i\neq j$, then $x\ge q$.  So, if it exists, the order limit in $\Cl{E}(M_m)$ of any subsequence of $(p_n)$ must dominate $q$.






\end{proof}

\begin{theorem}\label{2.0}
For a von Neumann algebra $\Cl{M}$ the following statements are equivalent.
\begin{enumerate}[{\rm(i)}]
  \item $\Cl M$ is abelian.
  \item For every $\f\in\Cl{M}_*^+$ the restriction of the norm-topology to $[0,\f]$ is equal to $\tau_o[0,\f]$.
\end{enumerate}
\end{theorem}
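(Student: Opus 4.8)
The plan is to treat both implications through a single common reduction and then to isolate exactly where commutativity is needed. First I would record the one inclusion that holds for every $\Cl M$: on any interval $[0,\f]$ with $\f\in\Cl M_\ast^+$ the norm topology is contained in $\tau_o[0,\f]$. Indeed $[0,\f]$ is an order-closed, conditionally monotone complete interval (\lemref{1.11}), so a net order-converging inside $[0,\f]$ also order-converges in $\Cl M_\ast^s$ and hence, by \lemref{1}, converges in norm; thus every norm-closed subset of $[0,\f]$ is order-closed. Consequently the whole content of the theorem is the \emph{reverse} inclusion $\tau_o[0,\f]\subset\tau_{\nor{\cdot}}$, and by the subsequence principle of \cite[Proposition 2]{BCW} this amounts to asking whether every norm-convergent sequence in $[0,\f]$ has a subsequence that order-converges to the same limit \emph{with the squeezing nets lying inside} $[0,\f]$.

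For $(i)\Rightarrow(ii)$ I would use that an abelian $\Cl M$ has self-adjoint predual $\Cl M_\ast^s$ which is a vector lattice (identifying $\Cl M\cong L^\infty$ and $\Cl M_\ast^s\cong L^1_{\R}$) that is moreover conditionally monotone complete. Given $\f_n\to\f_0$ in norm with $\f_n\in[0,\f]$, I would pass to a subsequence with $\sum_n\nor{\f_n-\f_0}<\infty$ and set $w_i=\sum_{n\ge i}|\f_n-\f_0|$; then $\nor{w_i}\to 0$ with $w_i$ decreasing, so $w_i\downarrow 0$, and since $\rho\le|\rho|$ for hermitian $\rho$ one gets $\f_0-w_i\le\f_n\le\f_0+w_i$ for all $n\ge i$. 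Exactly as in \theoref{1.1} this already gives order convergence in $\Cl M_\ast^s$, but the bounds $\f_0\pm w_i$ need not lie in $[0,\f]$. This is the single point where commutativity is essential: the lattice operations let me truncate to $z_i=(\f_0+w_i)\wedge\f$ and $y_i=(\f_0-w_i)\vee 0$, which lie in $[0,\f]$, still satisfy $y_i\uparrow\f_0$ and $z_i\downarrow\f_0$, and still squeeze $\f_n$ for $n\ge i$. Hence the subsequence order-converges to $\f_0$ \emph{within} $[0,\f]$, which together with the common reduction yields $\tau_o[0,\f]=\tau_{\nor{\cdot}}$.

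For $(ii)\Rightarrow(i)$ I would argue by contraposition, producing a single bad interval when $\Cl M$ is non-abelian. A non-abelian von Neumann algebra contains a unital copy $N\cong M_2$, i.e. a system of matrix units $\{e_{ij}\}_{i,j=1,2}$ with $e:=e_{11}+e_{22}$ (a standard consequence of comparison theory, two non-commuting projections producing two equivalent orthogonal subprojections). Using the splitting $e\Cl Me\cong M_2\otimes e_{11}\Cl Me_{11}$ and any normal state $\omega$ on $e_{11}\Cl Me_{11}$, the map $\mathrm{id}_{M_2}\otimes\omega$ followed by the compression $x\mapsto exe$ defines a normal, positive, unital conditional expectation $E_0\colon\Cl M\to N$. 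I then set $\f=\mathrm{Tr}_N\circ E_0\in\Cl M_\ast^+$ and transport the bad sequence of \lemref{lem3}: writing $\f_{p_n}\in[0,\mathrm{Tr}_N]$ for the functionals of the rotating rank-one projections of $N$, I put $\psi_n=\f_{p_n}\circ E_0\in[0,\f]$; positivity and contractivity of $E_0$ make $(\psi_n)$ norm-convergent.

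It remains to show that no subsequence of $(\psi_n)$ order-converges to its norm limit $\psi_0$ inside $[0,\f]$. The tool is the restriction map $R=\iota_\ast\colon\Cl M_\ast\to N_\ast$, the predual of the inclusion $N\hookrightarrow\Cl M$: it is positive, satisfies $R\circ E_{0\ast}=\mathrm{id}_{N_\ast}$, sends increasing nets to increasing nets (hence is order-continuous), and carries $\f$ to $\mathrm{Tr}_N$ and each $\psi_n$ to $\f_{p_n}$. Thus an order-convergent subsequence of $(\psi_n)$ in $[0,\f]$ would map under $R$ to an order-convergent subsequence of $(\f_{p_n})$ in $[0,\mathrm{Tr}_N]$, contradicting \lemref{lem3}. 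By \cite[Proposition 2.1]{CHW} the sequence $(\psi_n)$ therefore fails to converge in $\tau_o[0,\f]$ while converging in norm, so $\tau_o[0,\f]\neq\tau_{\nor{\cdot}}$ and $(ii)$ fails. I expect the main obstacle to be this transport step: verifying that $E_0$ is a genuine normal conditional expectation onto the matrix copy and that $R$ is order-continuous, so that \lemref{lem3} can be both lifted into $\Cl M_\ast$ and pulled back to force the contradiction.
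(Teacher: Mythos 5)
Your proof is correct, and although its skeleton matches the paper's (a seed counterexample in a matrix algebra from \lemref{lem3}, pushed into $\Cl M_\ast$ by a normal conditional expectation and pulled back by the restriction map; the lattice structure of $L^1$ in the abelian case), both implications are executed differently enough to be worth recording. For ${\rm(i)}\Rightarrow{\rm(ii)}$ the paper simply cites \cite[Proposition 2.3 (ii)]{CHW} for order-closed sublattices of the Dedekind complete lattice $L^1$, together with \theoref{1.1}; your explicit squeeze --- summable subsequence, $w_i=\sum_{n\ge i}|\f_n-\f_0|\downarrow 0$, then the lattice truncations $y_i=(\f_0-w_i)\vee 0$ and $z_i=(\f_0+w_i)\wedge\f$ --- is a self-contained reprise of the proof of \theoref{1.1} that isolates exactly where commutativity enters: the truncation needs $\vee,\wedge$, and by Sherman's theorem \cite{Sherman} the hermitian dual is a lattice only in the abelian case. (You use silently that the lattice operations in $L^1$ are order-continuous, which is standard.) For ${\rm(ii)}\Rightarrow{\rm(i)}$ your corner construction $E_0=(\mathrm{id}_{M_2}\otimes\omega)(e\,\cdot\,e)$ from a system of $2\times 2$ matrix units buys something real: it works with a non-unital copy of $M_2$, so it subsumes the paper's separate treatment of the case $z\neq\mathds 1$ and replaces the citation of \cite[IV 2.2.4]{Blackadar} by an elementary slice map. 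Your phrase ``unital copy $N\cong M_2$'' is false as literally stated --- $\cc\oplus M_2$ is non-abelian but contains no unital $M_2$, which is precisely why the paper passes to a central summand --- but your construction never uses $e=\mathds 1$, only $E_0(\mathds 1)=e=1_N$, so this is a wording slip, not a gap.

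Two small repairs are needed. First, the inference ``$R$ sends increasing nets to increasing nets (hence is order-continuous)'' is a non sequitur in general posets: monotonicity does not preserve suprema. Here it is true, but via \lemref{1.11}: if $\rho_\gamma\uparrow\rho$ in $[0,\f]$, then $\rho_\gamma\to\rho$ in norm, hence $R\rho_\gamma\to R\rho$ in norm, and the increasing net $(R\rho_\gamma)$ then has supremum $R\rho$ by \lemref{1.11} again; alternatively invoke \prref{1.12} (ii) for $R$ restricted to the intervals, which is exactly how the paper handles its restriction map $f(\f)=\f|_{\Cl N}$. Second, when you move between order convergence computed in $[0,\f]$ and in $\Cl M_\ast^s$ (in both your ``common reduction'' and the pull-back step), you should state that the interval is order-convex and $\Cl M_\ast^s$ is conditionally monotone complete, so suprema and infima of monotone nets taken in the interval agree with those taken in the whole space; you gesture at this, but it is the hinge on which both directions turn. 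With those two sentences added, the argument is complete.
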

\begin{proof}
${\rm(i)}\Rightarrow{\rm(ii)}$  If $\Cl{M}$ is abelian, then $\Cl M_\ast^s$ is isometrically isomorphic  (as an ordered normed space) with $L^1(X,\Sigma,\mu,\rr)$ for some localisable measure space $(X,\Sigma,\mu)$.  So it suffices to show that if $u\in L^1(X,\Sigma,\mu)$ and $u\ge 0$, then the $L^1$-norm topology on $[0,u]$ coincides with the order topology $\tau_o[0,u]$.  Since $L^1(X,\Sigma,\mu,\rr)$ is a Dedekind complete lattice (see for example \cite[244L p. 167]{Fremlin}), and since the interval $[0,u]$ is an order-closed sublattice of $L^1(X,\Sigma,\mu,\rr)$, the final assertion follows by \cite[Proposition 2.3 (ii)]{CHW} and by Theorem \ref{1.1}.

${\rm(ii)}\Rightarrow{\rm(i)}$ If $\Cl M$ is not abelian, then there exists a nonzero central projection $z$  and an integer $m\ge 2$ \st{} the  direct summand $z\Cl M$ contains a unital subalgebra $\Cl N$ that is $\ast$-isomorphic to $M_m$.
Let us first suppose that $z=\mathds 1$.  Let $\tilde\f_{\mathds{1}}$ denote the trace function on $\Cl N$, and let $\tilde\f_n$ $(n\in\nn)$ and $\tilde\f$ denote the positive linear functionals on $\Cl N$ produced in the proof of Lemma~\ref{lem3}, i.e.
\begin{enumerate}
\item $\tilde{\f}_n$ and $\tilde{\f}$ belong to $[0,\tilde{\f}_{\mathds{1}}]$,
\item $(\tilde{\f}_n)$ goes in norm to  $\tilde{\f}$,
\item  $(\tilde{\f}_n)$ is not convergent with respect to  $\tau_o[0, \tilde{\f}_{\mathds{1}}]$.
\end{enumerate}

By \cite[IV 2.2.4, p. 355]{Blackadar}) we know that there is a normal conditional expectation $\Phi$ of $\Cl M$ onto $\Cl N$.    For a general  functional $\tilde{\omega}$ on   $\Cl N$   denote by ${\omega}$ the normal functional on $\Cl M$ given by
  \[ {\omega}= \tilde{\omega} \circ \Phi\,.\]
By the properties of normal conditional expectation it follows that the map $\tilde{\omega}\to {\omega}$ is order and norm preserving.  Thus, $\f_{\mathds{1}}$ is a positive linear functional on $\Cl M$, the $\f_n$'s and $\f$ belong to $[0,\f_{\mathds 1}]$, and $\Vert\f_n-\f\Vert\to 0$.

The function $f:[0,\f_{\mathds{1}}]\to[0,\tilde{\f}_{\mathds{1}}]$ defined by $f(\f)=\f|_N$ is order-preserving and norm continuous.  Moreover, since  $[0,\f_{\mathds{1}}]$ and $[0,\tilde{\f}_{\mathds{1}}]$ are closed with respect to $\tau_o(\Cl M_\ast^s)$ and $\tau_o(\Cl N_\ast^s)$, respectively, {\rm(ii)} of  Proposition \ref{1.12} implies that $f$ is continuous with respect to $\tau_o[0,\f_{\mathds{1}}]$ and $\tau_o[0,\tilde{\f}_{\mathds{1}}]$.  So, if $\f_n\to\f$ with respect to $\tau_o[0,\f_{\mathds{1}}]$, then $\tilde{\f}_n\to\tilde{\f}$ with respect to $\tau_o[0,\tilde{\f}_{\mathds{1}}]$, which is not true.

 Now consider the case when $z\neq\mathds{1}$. Observe that if $\tau$ is a normal positive linear functional on $\Cl M$ \st{} $\tau(z)=\Vert \tau\Vert$ then  any  positive linear functional majorized by $\tau$ is zero on $(\mathds 1-z)\Cl M$.  Therefore the interval $[0, \tau]\subset \Cl M_*$ is order-isomorphic to the same interval taken in $(z\Cl M)_*$.  So the result follows by the previous part of the proof.
\end{proof}

Clearly, a C$^\ast$-algebra $\Cl A$ is abelian if and only if the enveloping von Neumann algebra $\Cl A^{\ast\ast}$ is abelian.  Therefore we get the following corollary.

\begin{corollary}\label{5.1}
For a C$^\ast$-algebra $\Cl A$ the following  two statements are equivalent.
\begin{enumerate}[{\rm(i)}]
\item $\Cl A$ is abelian.
\item For every nonzero $\f\in\Cl{A}^*_+$ the restriction of the norm-topology to $[0,\f]$ is equal to $\tau_o[0,\f]$.
\end{enumerate}
\end{corollary}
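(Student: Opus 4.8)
The plan is to reduce the assertion to \theoref{2.0} applied to the enveloping \vNa{} $\dual{\Cl A}$, using the canonical identification of $\Cl A^\ast$ with the predual of $\dual{\Cl A}$. Recall that $\dual{\Cl A}$ is a \vNa{} and that every $\f\in\Cl A^\ast$ possesses a unique normal extension $\tilde\f$ to $\dual{\Cl A}$; the map $\f\mapsto\tilde\f$ is a surjective linear isometry of $\Cl A^\ast$ onto $(\dual{\Cl A})_\ast$, whose inverse is restriction to $\Cl A\subset\dual{\Cl A}$.

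First I would verify that $\f\mapsto\tilde\f$ is also an order isomorphism, that is, $\f\in\Cl A^\ast_+$ if and only if $\tilde\f\in(\dual{\Cl A})_\ast^+$. One direction is immediate, since the restriction of a positive normal functional to $\Cl A$ stays positive. For the converse I would use that the positive cone of $\dual{\Cl A}$ is the $\sigma(\dual{\Cl A},\Cl A^\ast)$-closure of $\Cl A_+$ (a consequence of the Kaplansky density theorem): given positive $y\in\dual{\Cl A}$ there is a net in $\Cl A_+$ converging weak$^\ast$ to $y$, and since $\tilde\f$ is normal and nonnegative on $\Cl A_+$, continuity forces $\tilde\f(y)\ge 0$. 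Thus $\f\mapsto\tilde\f$ carries $\Cl A^\ast_+$ onto $(\dual{\Cl A})_\ast^+$, and being linear it is an isometric order isomorphism of $\Cl A^\ast_s$ onto $(\dual{\Cl A})_\ast^s$. This order-preservation step is the only point I expect to require genuine care; the rest is a formal transport of structure.

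Consequently, for each $\f\in\Cl A^\ast_+$ the isomorphism $\f\mapsto\tilde\f$ identifies the interval $[0,\f]\subset\Cl A^\ast$ with the interval $[0,\tilde\f]\subset(\dual{\Cl A})_\ast$, and this identification is simultaneously a homeomorphism for the norm topologies (it is an isometry) and for the order topologies (an order isomorphism of posets takes increasing and decreasing nets, and hence order-convergent nets, to order-convergent nets in both directions, so it preserves the intrinsic order topology of each interval). Therefore the restriction of the norm topology to $[0,\f]$ equals $\tau_o[0,\f]$ exactly when the analogous equality holds for $[0,\tilde\f]$ in $(\dual{\Cl A})_\ast$; the excluded value $\f=0$ is harmless, since $[0,0]=\{0\}$ carries only the trivial topology. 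Hence statement (ii) of the corollary is equivalent to statement (ii) of \theoref{2.0} for $\Cl M=\dual{\Cl A}$, which by that theorem is equivalent to $\dual{\Cl A}$ being abelian. Finally I would invoke the already-noted fact that $\Cl A$ is abelian if and only if $\dual{\Cl A}$ is abelian to obtain the stated equivalence.
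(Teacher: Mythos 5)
Your proposal is correct and takes essentially the same route as the paper: the paper deduces the corollary from \theoref{2.0} precisely via the canonical identification of $\Cl A^\ast$ with the predual of the enveloping von Neumann algebra $\dual{\Cl A}$ and the observation that $\Cl A$ is abelian if and only if $\dual{\Cl A}$ is. You have merely made explicit the transport-of-structure details (isometric order isomorphism, correspondence of intervals, the trivial case $\f=0$) that the paper leaves implicit.
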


We shall now compare the sets of null sequences in intervals of the type $[0,\f]$, $\f\in\Cl A^\ast$, with respect to the norm and order topology $\tau_o[0,\f]$.  For this investigation we shall make an essential use of the properties of the GNS representation and the map $\theta_\f$ that  identifies  the effect algebra of  the commutant of the GNS algebra  with the interval $[0,\f]$ in the dual space. Even though  this  map is important in many aspects of operator algebra theory,  we did not find a proper reference to the continuity properties of this map (and its inverse). We are therefore presenting this analysis that may be of independent interest. Let us fix the notation. \\

For a nonzero positive element \f{}  in the  dual $\Cl A^*$ let $C_\f= \mbox{span} [0,\f]$.  Via the GNS representation of $\Cl A$ induced by $\f$, it is possible to  define a  map
\[ \theta_\f: \pi_\f(\Cl A)'\to \Cl A^*:x\mapsto\theta_\f(x),\]
where
\[\theta_\f(x):\Cl{A}\to \cc:a\mapsto\langle\pi_\f(a)x\xi_\f,\xi_\f \rangle\,. \]
We recall (see for e.g. \cite[Proposition 3.10 p. 201]{Takesaki}) that   $\theta_\f$ is a linear-isomorphism of $\pi_\f(\Cl A)'$ onto $C_\f$ and that if we denote its inverse by $\Phi_\f$ then both $\theta_\f$ and $\Phi_\f$ are completely positive.
In particular, this implies that  the restriction of $\theta_\f$ is   an order-isomorphism between $\Cl E(\pi_\f(\Cl A)')$ and $[0, \f]$.

Let us mention that in  Remark 3.11 (i) on p.202 in \cite{Takesaki} it is erroneously stated that $\theta_\f$ need not be continuous. Although it is true that a completely positive map from a subspace of any of $\Cl A$  into another subspace of   $\Cl A^*$ need not be continuous, the continuity of $\theta_\f$ follows easily by the following estimation:

\[ \|\theta_\f(x)\|\le \|x\|\, \|\xi_\f\|^2\,.\]

On the other hand, it is the inverse map  $\Phi_\f$ that need not be continuous, as the following example shows.

\begin{example}
Let $\zeta=(\alpha_n)$ be a vector in $\ell^1$ such that $\alpha_n>0$ for every $n\in\nn$.  Then $\eta=(\sqrt{\alpha_n})$ belongs to $\ell^2$.    Let $\Cl{A}$ be the $C^*$-algebra $\ell^\infty$ acting on $\ell^2$ and let $\f:=\omega_\eta$ be the positive linear functional  on $\Cl A$ defined by
\[\f:a=(\beta_n)\mapsto\sum_{n=1}^{\infty}\beta_n\alpha_n=\langle a\eta,\eta\rangle\,.\]
Denote by $\ell^p_\zeta$ the sequence space $\mathcal L^p(\nn,\mathscr P(\nn),\mu_\zeta)$ where $\mu_\zeta$ is the $\sigma$-additive measure defined by $\mu_\zeta(\{n\})=\alpha_n$.  Let $\{H_\f,\pi_\f,\xi_\f\}$  be the GNS construction of $\Cl A$ with respect to $\f$.  Then $H_\f=\ell^2_\zeta$; the constant unit sequence equals $\xi_\f$; and $\pi_\f(a)$ equals the operator on $\ell^2_\zeta$ given by multiplication by $a$.  Our assumption on the $\alpha_n$'s implies that $\pi_\f$ is an isomorphism and  therefore  an isometry.  Let $h=(\beta_n)\in \Cl{A}$ and let $\psi_h$ denote the linear functional $\theta_\f(\pi_\f(h))$.  We show that $\Vert \psi_h\Vert =\Vert h\Vert_{\ell^1_\zeta}$.   First observe that for every $a=(\lambda_n)\in\Cl{A}_1$ we have
\[|\psi_h(a)|=|\langle \pi_\f(ah)\xi_\f,\xi_\f\rangle|=\left|\sum_{n=1}^{\infty}\lambda_n\beta_n\alpha_n\right|\le \sum_{n=1}^{\infty}|\beta_n|\alpha_n=\Vert h\Vert_{\ell^1_\zeta}\,.\]
On the other-hand, if we let $u$ denote the unitary element in $\Cl A$ such that $uh=|h|$, then
\[\Vert h\Vert_{\ell^1_\zeta} =\sum_{n=1}^{\infty}|\beta_n|\alpha_n= |\langle \pi_\f(uh)\xi_\f,\xi_\f\rangle|=|\psi_h(u)|\,.\]
Thus, we have that $\Vert \psi_h\Vert =\Vert h\Vert_{\ell^1_\zeta}$ and since the inverse $\Phi_\f$ of $\theta_\f$ maps $\psi_h$ into $\pi_\f(h)$, i.e. $\Phi_\f$ maps $\bigl(\ell^\infty,\Vert\cdot\Vert_{\ell^1_\zeta}\bigr)$ into $\bigl(\ell^\infty,\Vert\cdot\Vert_\infty\bigr)$  it follows that $\Phi_\f$ is not continuous.  Indeed, consider  the sequence $(x_n)$ in $\ell^\infty$,    where $x_n=(\delta_{jn})_j$. We can  see that $\Vert x_n\Vert_\infty=1$, while  $\Vert x_n\Vert_{\ell^1_\zeta}=\al_n\to 0$.     Note that in this case the space $C_\f$ is not complete since by the Open Mapping Theorem this would imply continuity of $\Phi_\f$
\end{example}


\begin{proposition}\label{2.3}
Let $\f\in \Cl A^*_+$.  Then $\Vert \theta_\f(x)\Vert=\Vert x^{\frac{1}{2}}\xi_\f\Vert^2$ for every $x\in \Cl E(\pi_\f(\Cl A)')$.
\end{proposition}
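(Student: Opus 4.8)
The plan is to exploit the positivity of $\theta_\f(x)$ together with the elementary fact that a positive functional on a unital \Ca{} attains its norm at the unit. Since $x\in \Cl E(\pi_\f(\Cl A)')$, the order-isomorphism property of $\theta_\f$ recalled just above the statement guarantees that $\theta_\f(x)$ lies in $[0,\f]$ and is therefore a positive linear functional on $\Cl A$. Consequently $\nor{\theta_\f(x)}=\theta_\f(x)(\mathds 1)$, and the whole assertion reduces to computing the value of $\theta_\f(x)$ at the unit.

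First I would unwind the definition of $\theta_\f$ at $\mathds 1$. Because $\Cl A$ is unital and $\f\neq 0$, the GNS representation is unital, i.e. $\pi_\f(\mathds 1)$ acts as the identity on the dense subspace $\pi_\f(\Cl A)\xi_\f$ and hence on all of $H_\f$. Therefore
\[\theta_\f(x)(\mathds 1)=\lt \pi_\f(\mathds 1)\,x\,\xi_\f,\xi_\f\rt=\lt x\xi_\f,\xi_\f\rt.\]
Since $x\ge 0$ in the von Neumann algebra $\pi_\f(\Cl A)'$, its positive square root $x^{1/2}$ is self-adjoint, so that $x=x^{1/2}(x^{1/2})^\ast$ and
\[\lt x\xi_\f,\xi_\f\rt=\lt x^{1/2}\xi_\f,x^{1/2}\xi_\f\rt=\nor{x^{1/2}\xi_\f}^2.\]
Combining these two displays with the identity $\nor{\theta_\f(x)}=\theta_\f(x)(\mathds 1)$ yields the claimed formula.

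The argument is short and presents no genuine obstacle: the only inputs are the positivity of $\theta_\f(x)$, already established via the order-isomorphism onto $[0,\f]$, and the standard norm formula $\nor{\psi}=\psi(\mathds 1)$ for positive functionals on a unital \Ca. If anything deserves a word of care, it is the observation that $\pi_\f(\mathds 1)$ is the identity operator, which is exactly where the unitality of $\Cl A$ and the hypothesis $\f\neq 0$ enter; everything else is a direct computation with the inner product.
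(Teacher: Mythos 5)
Your proof is correct, but it follows a genuinely different route from the paper's. The paper never invokes positivity of the functional $\theta_\f(x)$: it first establishes the upper bound $|\theta_\f(x)(a)|\le\Vert x^{1/2}\xi_\f\Vert^2$ for all $a$ in the unit ball, by writing $\langle\pi_\f(a)x\xi_\f,\xi_\f\rangle=\langle\pi_\f(a)x^{1/2}\xi_\f,x^{1/2}\xi_\f\rangle$ (using that $x^{1/2}\in\pi_\f(\Cl A)'$) and applying Cauchy--Schwarz, and then attains the bound asymptotically along an approximate identity $(u_\gamma)$, via $\theta_\f(x)(u_\gamma)\to\langle x\xi_\f,\xi_\f\rangle=\Vert x^{1/2}\xi_\f\Vert^2$. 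You instead use that $\theta_\f(x)\in[0,\f]$ is a positive functional, so that $\Vert\theta_\f(x)\Vert=\theta_\f(x)(\mathds 1)$, and reduce everything to one evaluation at the unit; this is legitimate because the paper fixes $\Cl A$ unital throughout, and your verification that $\pi_\f(\mathds 1)$ is the identity on $H_\f$ (identity on the dense subspace $\pi_\f(\Cl A)\xi_\f$, hence everywhere by continuity) is exactly the point that needs checking. What each approach buys: yours is shorter and isolates two standard facts (positivity of $\theta_\f(x)$, recalled just before the statement, and the norm-at-the-unit formula for positive functionals), while the paper's approximate-identity argument goes through verbatim for non-unital $\Cl A$ and needs only $x\ge 0$, not positivity of the resulting functional. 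Two minor remarks: the hypothesis $\f\neq 0$ that you flag is not in the statement and is not needed --- for $\f=0$ the space $H_\f$ is trivial and both sides vanish; and if you prefer not to lean on the cited complete positivity of $\theta_\f$, the positivity of $\theta_\f(x)$ follows by the direct computation $\theta_\f(x)(a^*a)=\langle x\pi_\f(a)\xi_\f,\pi_\f(a)\xi_\f\rangle\ge 0$, since $x\ge 0$ commutes with $\pi_\f(a)$.
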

\begin{proof}
For any $x\in \Cl E(\pi_\f(\Cl A)')$ and $a\in \Cl A_1$ we have
\[
  \left|(\theta_\f(x))(a)\right| =\left|\langle \pi_\f(a) x\xi_\f,\xi_\f\rangle\right|=\left|\langle\pi_\f(a)x^{\frac{1}{2}}\xi_\f,x^{\frac{1}{2}}\xi_\f\rangle\right| \le\Vert x^{\frac{1}{2}}\xi_\f\Vert^2\,.\]
To establish the equality one simply needs to consider an approximate identity $(u_\gamma)$ in $\Cl A$.
\[
(\theta_\f(x))(u_\gamma)=\langle \pi_\f(u_\gamma) x\xi_\f,\xi_\f\rangle=\langle \pi_\f(u_\gamma) x\xi_\f-x\xi_\f,\xi_\f\rangle+\langle x\xi_\f,\xi_\f\rangle\ \to\ \Vert x^{\frac{1}{2}}\xi_\f\Vert^2\,.\]
\end{proof}

\begin{corollary}\label{2.31}Let $\f\in \Cl A^*_+$.  A net $(x_\gamma)$ in $\Cl E(\pi_\f(\Cl A)')$ is null with respect to the strong operator topology if and only if $\left(\theta_\f(x_\gamma)\right)$ is null with respect to the norm topology.
\end{corollary}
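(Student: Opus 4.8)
The plan is to reduce everything to the identity $\|\theta_\f(x)\|=\langle x\xi_\f,\xi_\f\rangle$ supplied by Proposition~\ref{2.3}. Indeed, for $x\in\Cl E(\pi_\f(\Cl A)')$ the operator $x^{1/2}$ is self-adjoint with $(x^{1/2})^2=x$, so $\|x^{1/2}\xi_\f\|^2=\langle x^{1/2}\xi_\f,x^{1/2}\xi_\f\rangle=\langle x\xi_\f,\xi_\f\rangle$. Thus throughout the argument I may replace $\|\theta_\f(x_\gamma)\|$ by $\langle x_\gamma\xi_\f,\xi_\f\rangle$, and the whole statement becomes a comparison between the behaviour of $\langle x_\gamma\xi_\f,\xi_\f\rangle$ and that of the seminorms $\eta\mapsto\|x_\gamma\eta\|$.

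For the forward implication I would argue directly. If $(x_\gamma)$ is null in the strong operator topology, then in particular $\|x_\gamma\xi_\f\|\to 0$ by testing against the single vector $\xi_\f$. The Cauchy--Schwarz inequality then gives $\langle x_\gamma\xi_\f,\xi_\f\rangle\le\|x_\gamma\xi_\f\|\,\|\xi_\f\|\to 0$, so $\|\theta_\f(x_\gamma)\|\to 0$ by the identity above.

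For the converse, suppose $\|\theta_\f(x_\gamma)\|=\langle x_\gamma\xi_\f,\xi_\f\rangle\to 0$. The first step is to upgrade this to strong convergence at the cyclic vector itself: since $0\le x_\gamma\le\mathds 1$ we have $x_\gamma^2\le x_\gamma$ by functional calculus, whence $\|x_\gamma\xi_\f\|^2=\langle x_\gamma^2\xi_\f,\xi_\f\rangle\le\langle x_\gamma\xi_\f,\xi_\f\rangle\to 0$. The decisive step is then to propagate convergence from $\xi_\f$ to an arbitrary vector of $H_\f$. Here I exploit that each $x_\gamma$ lies in the commutant $\pi_\f(\Cl A)'$: for any $a\in\Cl A$ it commutes with $\pi_\f(a)$, so $\|x_\gamma\,\pi_\f(a)\xi_\f\|=\|\pi_\f(a)\,x_\gamma\xi_\f\|\le\|a\|\,\|x_\gamma\xi_\f\|\to 0$, which shows strong convergence on the dense subspace $\pi_\f(\Cl A)\xi_\f$. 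Finally, since $\|x_\gamma\|\le 1$ for all $\gamma$, the net is uniformly bounded, so a standard $3\varepsilon$-approximation extends the convergence from this dense subspace to all of $H_\f$, yielding that $(x_\gamma)$ is strongly null.

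I expect the main obstacle to be the converse, and within it the propagation step. The two ingredients that make it work are that $\xi_\f$ is \emph{cyclic} for $\pi_\f(\Cl A)$ and that the $x_\gamma$ sit in the \emph{commutant}; together with the uniform bound $\|x_\gamma\|\le 1$ these let the control obtained at the single vector $\xi_\f$ spread across the whole Hilbert space. The forward direction, by contrast, is essentially a one-line Cauchy--Schwarz estimate once Proposition~\ref{2.3} is in hand.
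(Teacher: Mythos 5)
Your proof is correct and follows essentially the same route as the paper: the paper likewise reduces everything to Proposition~\ref{2.3} via the equivalences $\|x_\gamma\xi_\f\|\to 0 \Leftrightarrow \|x_\gamma^{1/2}\xi_\f\|\to 0$ for positive contractions, and invokes (without writing out) the standard fact that a bounded net in the commutant is strongly null iff it vanishes at the cyclic vector $\xi_\f$ --- which is exactly your propagation-plus-$3\varepsilon$ argument made explicit.
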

\begin{proof}
Since $\xi_\f$ is cyclic for $\pi_\f(\Cl A)$, a bounded net $(x_\gamma)$ in $\pi_\f(\Cl A)'$ is null with respect to the strong operator topology if and only if $x_\gamma\xi_\f\to 0$.  Moreover, since  $x_\gamma\ge 0$ for every $\gamma$, the condition $x_\gamma\xi_\f\to0$ is equivalent to $x_\gamma^{\frac{1}{2}}\xi_\f\to 0$.  So the Corollary follows by Proposition \ref{2.3}.
\end{proof}

\begin{theorem}\label{4.1}
Let \f{} be a state on a \Ca{} $\Cl A$. The following statements are equivalent.
\begin{enumerate}[{\rm(i)}]
\item The \vNa{} $\pi_\f(\Cl A)'$ is finite.
\item The interval $[0,\f]$ has the same null sequences with respect to  the norm topology and the order topology $\tau_o[0,\f]$.
\end{enumerate}
\end{theorem}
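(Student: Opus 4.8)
Write $\Cl N=\pi_\f(\Cl A)'$ for the commutant of the GNS algebra. The plan is to transport the entire question, through the order-isomorphism $\theta_\f$, from the interval $[0,\f]$ to the effect algebra $\Cl E(\Cl N)$, where Theorem~\ref{1.0} is directly applicable. First I would record the three facts that make this transport legitimate. Since $\xi_\f$ is cyclic for $\pi_\f(\Cl A)$, it is separating for $\Cl N=\pi_\f(\Cl A)'$, so the vector state $\omega_{\xi_\f}$ restricts to a faithful normal state on $\Cl N$; hence $\Cl N$ is $\si$-finite and Theorem~\ref{1.0} may be invoked for it. Next, by the discussion preceding Proposition~\ref{2.3}, the restriction of $\theta_\f$ is an order-isomorphism of $\Cl E(\Cl N)$ onto $[0,\f]$; being an order-isomorphism it preserves order convergence of sequences in both directions, and is therefore a homeomorphism between $\bigl(\Cl E(\Cl N),\tau_o(\Cl E(\Cl N))\bigr)$ and $\bigl([0,\f],\tau_o[0,\f]\bigr)$. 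Finally, Corollary~\ref{2.31} identifies the norm-null sequences in $[0,\f]$ with the strong-operator-null sequences in $\Cl E(\Cl N)$, and since $\Cl E(\Cl N)$ is bounded these coincide with the $\si$-strongly null sequences.

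With these identifications in place, statement {\rm(ii)} becomes the assertion that a sequence in $\Cl E(\Cl N)$ is $\si$-strongly null if and only if it is $\tau_o(\Cl E(\Cl N))$-null, and I would prove the two implications separately. The implication ``$\tau_o$-null $\Rightarrow$ norm-null'' is unconditional: $[0,\f]$ is norm-closed in $\Cl A^*_s=(\Cl A^{\ast\ast})_\ast^s$, so Proposition~\ref{1.12}(i), applied with $\Cl M=\Cl A^{\ast\ast}$, gives $\tau_o(\Cl A^*_s)|[0,\f]\subset\tau_o[0,\f]$, while by Theorem~\ref{1.1} the ambient order topology $\tau_o(\Cl A^*_s)$ is nothing but the norm topology. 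Thus $\tau_o[0,\f]$ is finer than the norm topology on $[0,\f]$, so every $\tau_o[0,\f]$-null sequence is automatically norm-null; this half uses no hypothesis on $\Cl N$.

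The whole content therefore sits in the reverse implication ``norm-null $\Rightarrow$ $\tau_o$-null''. Transported to $\Cl E(\Cl N)$ it reads: every $\si$-strongly null sequence in $\Cl E(\Cl N)$ is $\tau_o(\Cl E(\Cl N))$-null, which is precisely condition {\rm(ii)} of Theorem~\ref{1.0} for the $\si$-finite algebra $\Cl N$. By that theorem it holds exactly when $\Cl N$ is finite, i.e.\ exactly under {\rm(i)}. Assembling the two halves yields {\rm(i)} $\Leftrightarrow$ ``$\si$-strong-null $\Rightarrow$ $\tau_o$-null'' $\Leftrightarrow$ ``$\si$-strong-null $\Leftrightarrow$ $\tau_o$-null'' (the extra direction being free by the previous paragraph) $\Leftrightarrow$ {\rm(ii)}, which is the desired equivalence.

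I expect the main obstacle to be bookkeeping rather than a new idea. One must be careful that the order-isomorphism $\theta_\f$ genuinely matches the intrinsic topology $\tau_o[0,\f]$ with the intrinsic $\tau_o(\Cl E(\Cl N))$, and \emph{not} merely the subspace order topologies inherited from the ambient spaces, which the paper explicitly warns need not agree. One must also attribute the ``free'' direction to Proposition~\ref{1.12} together with Theorem~\ref{1.1} rather than to any finiteness of $\Cl N$. Checking the $\si$-finiteness of $\Cl N$ via the separating vector $\xi_\f$, and the coincidence of $\si$-strong and strong-operator convergence on the bounded set $\Cl E(\Cl N)$, are the remaining routine points to pin down.
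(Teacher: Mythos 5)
Your proof is correct and follows essentially the same route as the paper: the paper's own argument consists precisely of observing that $\pi_\f(\Cl A)'$ is $\si$-finite because $\omega_{\xi_\f}$ is a faithful normal state on it, and then combining the order-isomorphism between $\Cl E(\pi_\f(\Cl A)')$ and $[0,\f]$ given by $\theta_\f$ with Corollary~\ref{2.31} and Theorem~\ref{1.0}. The only difference is one of detail: you spell out the unconditional half (that $\tau_o[0,\f]$-null sequences are norm-null, via Proposition~\ref{1.12}(i) and Theorem~\ref{1.1}) and the coincidence of the strong operator and $\si$-strong topologies on the bounded set $\Cl E(\pi_\f(\Cl A)')$, both of which the paper leaves implicit in its ``follows immediately.''
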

\begin{proof}
First let us observe that $\pi_\f(A)'$ is $\si$-finite as $\omega_{\xi_\f}$ is a faithful normal state on it. Then the result follows immediately by   Corollary~\ref{2.31} together with Theorem~\ref{1.0}, and  the fact  that $[0,\f]$ and $\Cl E(\pi_\f(\Cl A)')$ are order-isomorphic.
\end{proof}

 Let us remark that when a state \f{} is faithful, then the vector $\xi_\f$ is bicyclic. In this case, deep  modular theory  (see e.g. \cite{B}) tells us  that $\pi_\f(\Cl A)''$ and $\pi_\f(\Cl A)'$  are anti-isomorphic. It follows therefore that  $\pi_\f(\Cl A)''$    is finite if and only if $\pi_\f(\Cl A)'$ is finite (see e.g. \cite[Theorem 9.1.3, p. 588]{Kad2}).
 Therefore we have the following corollary.

We say that a state $\f$ on a $C^*$-algebra  $\Cl{A}$ is a \emph{smooth state} if it satisfies {\rm(ii)} of Theorem \ref{4.1}.

\begin{corollary}
For a faithful state \f{}  on a \Ca{}  $\Cl A$ the following conditions are equivalent:
 \begin{enumerate}
\item   $\pi_\f(\Cl A)''$ is finite,
\item    $\f$ is smooth.
\end{enumerate}
Moreover, if $\Cl M$ is a \vNa{} and \f{} is a faithful normal state on $\Cl M$, then $\f$ is smooth if and only if $\Cl{M}$ is finite.
\end{corollary}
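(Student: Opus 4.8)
The plan is to obtain both assertions by assembling Theorem~\ref{4.1}, the definition of a smooth state, and the remark that directly precedes the corollary, supplying one additional observation for the ``Moreover'' clause.

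First I would dispose of the equivalence of {\rm(1)} and {\rm(2)}. By the definition just given, $\f$ being smooth is nothing but condition {\rm(ii)} of Theorem~\ref{4.1}, and that theorem asserts this is equivalent to finiteness of the commutant $\pi_\f(\Cl A)'$. It then remains only to pass from the commutant to the bicommutant, and this is exactly the content of the preceding remark: since $\f$ is faithful, $\xi_\f$ is separating and hence bicyclic, so Tomita--Takesaki theory yields a $\ast$-anti-isomorphism between $\pi_\f(\Cl A)''$ and $\pi_\f(\Cl A)'$, whence (finiteness being insensitive to anti-isomorphism, as recorded there via \cite[Theorem~9.1.3]{Kad2}) the two are finite simultaneously. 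Chaining the two equivalences gives {\rm(1)} $\Leftrightarrow$ {\rm(2)}.

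For the ``Moreover'' statement I would take $\Cl A=\Cl M$ and let $\f$ be a faithful \emph{normal} state, so that the equivalence just proved applies (a faithful normal state is in particular faithful). The sole new ingredient is the identification $\pi_\f(\Cl M)''\cong\Cl M$. This is where normality enters decisively: the GNS representation of a normal state is \si-weakly continuous, so $\pi_\f$ is a normal $\ast$-homomorphism; being induced by a faithful state it is injective, hence a normal $\ast$-isomorphism onto its range, and the range of a normal isomorphism of a \vNa{} is \si-weakly closed. Thus $\pi_\f(\Cl M)=\pi_\f(\Cl M)''$ is $\ast$-isomorphic to $\Cl M$, so $\pi_\f(\Cl M)''$ is finite \ifff{} $\Cl M$ is finite; combined with {\rm(1)} $\Leftrightarrow$ {\rm(2)} this gives that $\f$ is smooth \ifff{} $\Cl M$ is finite.

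The only genuinely delicate step is the identification $\pi_\f(\Cl M)''\cong\Cl M$, which rests squarely on normality of $\f$: for a merely faithful (non-normal) state the image $\pi_\f(\Cl M)$ need not be \si-weakly closed, and $\pi_\f(\Cl M)''$ could be strictly larger than $\Cl M$ and even of a different type. Everything else is bookkeeping with facts already in hand.
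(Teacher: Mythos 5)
Your proposal is correct and follows essentially the same route as the paper: the equivalence of {\rm(1)} and {\rm(2)} is obtained, exactly as intended, by chaining Theorem~\ref{4.1} with the preceding remark that for a faithful state $\f$ the vector $\xi_\f$ is bicyclic and modular theory makes $\pi_\f(\Cl A)''$ and $\pi_\f(\Cl A)'$ anti-isomorphic (so simultaneously finite), while the ``Moreover'' clause is the paper's own argument that a faithful normal state induces a normal faithful representation, whence $\pi_\f(\Cl M)=\pi_\f(\Cl M)''$ is isomorphic as a \vNa{} to $\Cl M$. Your added emphasis that normality is what guarantees the $\sigma$-weakly closed range (and hence the identification $\pi_\f(\Cl M)''\cong\Cl M$) is a correct and welcome elaboration of the one-line proof in the paper, not a deviation from it.
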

\begin{proof}
The second part follows from  the fact that if \f{} is faithful and normal then $\pi_\f$ is a normal faithful representation and so $\pi_\f(\Cl M)$ is isomorphic as a \vNa{} to $\Cl M$.
\end{proof}

\begin{theorem}\label{3.3} Let $\Cl{A}$ be a $C^*$-algebra.
\begin{enumerate}[{\rm(i)}]
\item If every state  on  $\Cl A$ is smooth then $\Cl{A}$ is of type $I$.
\item In particular, if $\Cl{A}$ is a von Neumann algebra and every normal state on $\Cl A$ is smooth then $\Cl{A}$ is  isomorphic to a finite direct sum of matrix algebras over abelian \vNa s.
\end{enumerate}
\end{theorem}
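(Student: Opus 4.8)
The plan is to prove both parts by contraposition, in each case producing a state whose GNS commutant fails to be finite and then invoking \theoref{4.1}, which identifies smoothness of $\f$ with finiteness of $\pi_\f(\Cl A)'$.

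For (i), assume $\Cl A$ is not of type~I. Appealing to the structure theory of non-type-I $C^\ast$-algebras, $\Cl A$ admits a factor representation $\pi$ for which $M:=\pi(\Cl A)''$ is a factor that is not of type~I; the point I would use is that such an algebra in fact admits a \emph{properly infinite} factor representation (of type~$\mathrm{II}_\infty$ or type~$\mathrm{III}$). Fix a faithful normal state $\psi$ on the $\sigma$-finite factor $M$ and set $\f:=\psi\circ\pi$. Then the GNS triple of $\f$ is unitarily equivalent to the standard form of $M$, so that $\pi_\f(\Cl A)''=M$ with $\xi_\f$ cyclic \emph{and} separating. Consequently $\pi_\f(\Cl A)'=M'$ is anti-isomorphic to $M$, hence again properly infinite and in particular not finite; by \theoref{4.1} the state $\f$ is not smooth, contradicting the hypothesis. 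Therefore $\Cl A$ is of type~I. I expect the extraction of a properly infinite factor representation to be the main obstacle, since a merely non-type-I (possibly type~$\mathrm{II}_1$) factor representation realised in standard form would yield a \emph{finite} commutant and hence a smooth state; ruling out this alternative is exactly the content of the structural input above.

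For (ii), let $\Cl M$ be a von Neumann algebra all of whose normal states are smooth. I would first show $\Cl M$ is finite. Since finiteness is detected on $\sigma$-finite direct summands, suppose to the contrary that $\Cl M$ has a properly infinite summand; cutting it down by a $\sigma$-finite projection produces a $\sigma$-finite, properly infinite von Neumann algebra, which carries a faithful normal state $\f$. Realising $\f$ in standard form makes $\pi_\f(\Cl M)'$ properly infinite, hence not finite, so $\f$ is a non-smooth normal state --- a contradiction. (When $\Cl M$ is $\sigma$-finite this step is immediate from the Corollary to \theoref{4.1}, which characterises finiteness of $\Cl M$ by smoothness of a faithful normal state.) Thus $\Cl M$ is finite.

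It remains to upgrade ``finite'' to ``a finite direct sum of matrix algebras over abelian von Neumann algebras'', that is, to show $\Cl M$ is of type~I of bounded degree. The plan is to exclude, in turn, a type~$\mathrm{II}_1$ summand and a homogeneous type~I summand of unbounded degree, in each case by exhibiting on that summand a normal state $\f$ together with a norm-null sequence in $[0,\f]$ that is \emph{not} null for $\tau_o[0,\f]$. The model for such a sequence is \lemref{lem3}, where linearly independent one-dimensional projections approximating a limit projection fail to order-converge; this is the same mechanism that underlies \theoref{2.0}. This last step is the crux: on a finite summand the commutant $\pi_\f(\Cl A)'$ is automatically finite, so the failure of order-convergence cannot be read off from finiteness alone and must be built by hand, exploiting the net-level (rather than merely sequential) behaviour of the order topology on the interval. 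Carrying out this construction uniformly, and thereby forcing both the type and the degree bound, is where I expect the real work to lie.
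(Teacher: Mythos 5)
Your part (i) is essentially the paper's proof. The ``structural input'' you defer is exactly the Glimm--Sakai theorem that the paper invokes: a non-type-I \Ca{} admits a type III (hence properly infinite) factor representation. One repair is needed: in the nonseparable case the factor $M$ so obtained need not be $\sigma$-finite, so your faithful normal state $\psi$ need not exist. The paper avoids this by taking an arbitrary vector state $\omega_\xi$ and compressing, using $(p_\xi\Cl M)'=p_\xi\Cl M'p_\xi$ together with the stability of type III under passing to corners and commutants; on your route you would have to cut $M$ down by a carefully chosen $\sigma$-finite projection whose corner is still infinite (an arbitrary $\sigma$-finite corner of a properly infinite algebra can be finite). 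Modulo this, your standard-form argument is a correct variant of the paper's.

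Part (ii) contains a genuine and unfixable gap: the ``real work'' you postpone to the end cannot be carried out, and this already follows from the paper's own \theoref{4.1}. If $\f$ is a normal state on a \vNa{} $\Cl M$ with support projection $s$, then realizing $\f$ as a vector state $\omega_\xi$ in the standard form of $\Cl M$ (with modular conjugation $J$, $J\xi=\xi$) gives $H_\f=[\Cl M\xi]$, whose projection is $JsJ$, so that $\pi_\f(\Cl M)'=JsJ(J\Cl M J)JsJ=J(s\Cl M s)J$ is anti-isomorphic to the corner $s\Cl M s$. Hence on a finite summand \emph{every} normal state has finite GNS commutant and is therefore smooth: \theoref{4.1} is an equivalence, so no norm-null sequence in $[0,\f]$ failing to be $\tau_o[0,\f]$-null can exist there, and no appeal to ``net-level behaviour'' can help, because smoothness is by definition a purely sequential condition. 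Your model, \lemref{lem3}, does not do what you need: its sequence $(\f_{p_n})$ converges in norm to the \emph{nonzero} functional $\f_p$; since $\tau_o[0,\f]$ is not translation-invariant, this shows the two topologies differ (the mechanism behind Theorem \ref{2.0}) without producing distinct null sequences. Concretely, a $\mathrm{II}_1$ factor satisfies ``every normal state is smooth'' and yet is not a finite direct sum of matrix algebras over abelian \vNa s, so the conclusion of (ii) is simply not reachable from the normal-state hypothesis alone.

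This also explains the divergence from the paper: there, (ii) is proved as the specialization of (i) signalled by ``in particular'' --- with all states, including singular ones, assumed smooth, part (i) gives that the algebra is type I as a \Ca, hence nuclear, and a nuclear \vNa{} is precisely a finite direct sum $M_{k_1}(\Cl A_1)\oplus\cdots\oplus M_{k_j}(\Cl A_j)$ with the $\Cl A_i$ abelian. The singular states are indispensable: they are the only possible source of non-smoothness on finite summands, and your first step (finiteness of $\Cl M$, which is correct up to the same caveat about choosing the $\sigma$-finite projection) is as far as normal states can take you.
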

\begin{proof}
If $\Cl{A}$ is not of type $I$, then by the celebrated Glimm-Sakai Theorem -- established for  the nonseparable case in \cite{Sakai1,Sakai2} -- we know that $\Cl{A}$ has a type $III$ representation, say $(\pi,H)$.  Let $\xi$ be a unit vector in $H$ and let $\f$ denote the state on $\Cl{A}$ defined by $\f(a)=\langle \pi(a)\xi,\xi\rangle$.  Then $H_\f$ is unitarily equivalent to the subspace $H_\xi=[\pi(\Cl{A})\xi]$ of $H$ and the GNS representation $\pi_\f$ is equivalent to the compression of $\pi$ to $H_\xi$.  Denote by $p_\xi$ the projection of $H$ onto $H_\xi$ and let
$\Cl M= \pi_\f(\Cl A)''$.
Since \Cl M{} is of type III,  the same holds for its commutant $\Cl M'$ (see e.g. \cite[Theorem 9.1.3, p. 558]{Kad2}). This property passes  to  the hereditary subalgebra $p_\xi\,  \Cl M'\ p_\xi$ as well. Working on the Hilbert space $H_\xi$ we have
\[ (p_\xi\Cl M)' = (p_\xi\Cl M p_\xi)'=  p_\xi\Cl M' p_\xi\,.\]
So $(p_\xi\Cl M)'$ -- and therefore the algebra $p_\xi\Cl M$ (by \cite[Theorem 9.1.3, p. 558]{Kad2}) --  is of type III.  Since $p_\xi\Cl M$ is isomorphic to $\pi_\f(\Cl{A})''$ it follows that $\pi_\f(\Cl{A})'$ is of type $III$ and therefore infinite.  Hence $\f$ is not smooth by Theorem \ref{4.1}.

The second assertion follows from the fact that any type I \Ca{} is nuclear (see e.g. \cite[Proposition 2.7.3]{Ozawa}). Moreover,  it is known that a \vNa{} \Cl A{} is nuclear if and only if it is a finite direct sum of finite homogeneous algebras, that is exactly when $\Cl A = M_{k_1}(\Cl A_1)\oplus M_{k_2}(\Cl A_2)\oplus \cdots \oplus M_{k_j}(\Cl A_j)$, where $\Cl A_1, \Cl A_2, \ldots \Cl A_j$ are abelian \vNa s.
\end{proof}

{\bf Acknowledgment} The work of Jan Hamhalter was supported by the project  CZ.02.1.01/0.0/0.0/16\_019/00007 and by the project of ``Czech Science Foundation"   [grant number 17-00941S,  ``Topological and geometrical properties of Banach spaces and operator algebras II"].

\end{document}